\documentclass[10pt,english]{amsart}
\usepackage{amsfonts, amssymb, amsmath, amsthm, eucal, latexsym}

\usepackage[english]{babel}

\usepackage{epsfig}
\usepackage{amsmath}
\usepackage{color}
\usepackage{graphicx}
\hoffset = 0pt \voffset = 0pt \textwidth = 420pt \textheight = 620pt
\topmargin = 0pt \headheight = 12pt \headsep = 0pt \oddsidemargin =
10pt \evensidemargin = 10pt \marginparwidth = 0pt \marginparsep =
0pt

\def\div{\hbox{div}}

\pagenumbering{arabic}

\newcommand{\ba}{\begin{eqnarray}}
\newcommand{\ea}{\end{eqnarray}}

\newtheorem{thm}{Theorem}[section]

\newtheorem{theorem}[thm]{Theorem}
\newtheorem{definition}[thm]{Definition}
\newtheorem{lemma}[thm]{Lemma}
\newtheorem{proposition}[thm]{Proposition}
\newtheorem{corollary}[thm]{Corollary}
\newtheorem{rem}[thm]{Remark}

\numberwithin{equation}{section}

\makeatletter

\newcommand{\Rmnum}[1]{\expandafter\@slowromancap\romannumeral #1@}
\makeatother

\title{\bf ASYMPTOTICS OF THE CRITICAL NON-LINEAR WAVE EQUATION FOR A CLASS OF NON STAR-SHAPED OBSTACLES}
\author{ Farah ABOU SHAKRA }
\thanks{The author is a Lebanese CNRS scholar.}
\address{Universit\'{e} Paris 13, Sorbonne Paris Cit\'{e}, LAGA, CNRS (UMR 7539), F-93430, Villetaneuse, France.
}

\email{shakra@math.univ-paris13.fr}

\begin{document}

\begin{abstract}
Scattering for the energy critical non-linear wave equation for domains exterior to non trapping obstacles in 3+1 dimension is known for the
star-shaped case. In this paper, we extend the scattering for a class of non star-shaped obstacles called illuminated from exterior. The main tool we use is the method of multipliers with
weights that generalize the Morawetz multiplier to suit the geometry of the obstacle.
\end{abstract}

\maketitle

\section{Introduction}
 In this paper we are working on the energy critical nonlinear wave equation in 3+1 dimension in a domain $\Omega=\mathbb{R}^3\setminus V$ 
where $V$ is a non-trapping obstacle with smooth boundary
\begin{align}\label{waveequation}
&\Box u=(\partial^{2}_{t}-\Delta)u=-u^{5}\;\;in\;\; \mathbb{R}\times\Omega\notag\\
&u|_{\mathbb{R}\times\partial\Omega}=0\\
&(\nabla u(t,\cdot),\partial_{t}u(t,\cdot))\in L^{2}(\Omega)\;\;\;t\in\mathbb{R}\notag
\end{align}
which enjoys the conservation of energy
 $$E=E(t)=\int_{\Omega}\frac{|\partial_{t}u|^{2}}{2}+\frac{|\nabla u|^{2}}{2}+\frac{|u|^{6}}{6}dx.$$
In the boundaryless case ($\Omega=\mathbb{R}^3$), the first results for the global existence were obtained by Grillakis (\cite{G90}, \cite{G92}). 
He showed that there are global smooth solutions of the critical wave equation, if the data is smooth. Shatah and Struwe (\cite{SS93}, \cite{SS94}) 
extended this theorem by showing that there are global solutions for the data lying in the energy space $H^1\times L^2$. 
They also obtained results for critical wave equation in higher dimensions.

For the case of obstacles, the first results were due to Smith and Sogge \cite{SS95}. They showed that Grillakis theorem extends 
to the case where $\Omega$ is the complement of a smooth, compact, strictly convex obstacle with Dirichlet boundary conditions. 
This result was later extended to the case of arbitrary domains in $\mathbb{R}^3$ and data in the energy space by Burq, Lebeau and Planchon
\cite{BLP08}. The case of the nonlinear critical Neumann wave equation in 3-dimensions was subsequently handled by Burq and Planchon
\cite{BP09}.

More specifically, in this paper we are interested in asymptotics, i.e. how solutions to the nonlinear equation scatter to a solution to 
the homogeneous linear equation.
In the boundaryless case ($\Omega=\mathbb{R}^{3}$), first results were obtained by Bahouri-G\'{e}rard in \cite{BG99}; in their 
paper they used the following decay estimate proved by Bahouri-Shatah \cite{BS98}
\begin{equation*}
\lim_{|t|\rightarrow+\infty}\frac{1}{6}\int_{\Omega}|u(t,x)|^{6}dx=0.
\end{equation*}
to get
\begin{equation*}
\left\|u\right\|_{L^{5}(\mathbb{R};L^{10}(\Omega))}+\left\|u\right\|_{L^{4}(\mathbb{R};L^{12}(\Omega))}<\infty,
\end{equation*}
and thus scattering. Moreover, in \cite{BG99} Bahouri-G\'{e}rard used profile decomposition to show that $\left\|u\right\|_{L^{5}(\mathbb{R};L^{10}(\Omega))}$ 
is also controlled by a universal function of the energy $f(E)$.\newline
Then the scattering result was extended to the case of star-shaped obstacles ($x\cdot n\geq 0$ for $x\in\partial V$ with $n$ the outward 
pointing unit normal vector to $\partial V$) by Blair, Smith, and Sogge in \cite{BSS09}. 
They used the same $L^{6}$ decay estimate proved by Bahouri-Shatah in \cite{BS98} after they extended it to their case of obstacles 
making slight modifications on the proof to handle the boundary term.

In the papers by Bahouri-Shatah \cite{BS98} and Blair, Smith, and Sogge \cite{BSS09}, the $L^{6}$ decay estimate
which is the main key to prove scattering was proved using the method
of multipliers. The method of multipliers is also called Friedrichs' ABC method as it dates back to Kurt O. Friedrichs in the 1950's. The idea of this method is to multiply the equation with a factor $Nu$, with $N$ is a linear first-order 
differential operator, defined as
$$Nu=Au +B\cdot\nabla u+C\partial_{t}u$$ 
and then to express the product as a divergence or energy identity of the form $$\div_{t,x}(\cdots)+Remaining\;terms=0$$
and finally to integrate this divergence identity over a domain in $\mathbb{R}^{n+1}$ and subsequently derive the required estimates.
The only case where the differential multiplier is adapted to both the
wave equation in terms of commutation (avoiding remaining terms) and the geometry of the obstacle in terms of
the sign of the boundary term, is the star-shaped case.
The method of multipliers was used in the 1960's and 1970's to prove uniform decay results for the homogeneous 
linear wave equation ($\Box u=0$) outside obstacles. Cathleen S. Morawetz was the first to succeed in proving uniform local energy decay for star-shaped obstacles
with Dirichlet boundary condition using this method (\cite{M61} and \cite {M62}). Since then, the results of Morawetz have 
been considerably improved. Better decay rates have been achieved (as in odd dimensions $n\geq 3$, Huygen's principle has been shown to imply an exponential rate of decay whenever there is some sort of decay \cite{LMP63}, \cite{M66}). 
Moreover, the class of obstacles under consideration has been enlarged; decay results have been derived for a special case of non trapping obstacles referred to as \textquotedblleft almost star-shaped regions\textquotedblright (Ivrii \cite{Ivrii}) and for non trapping obstacles with simple and direct 
geometrical generalizations to the star-shaped such as the \textquotedblleft illuminated from interior\textquotedblright (Bloom and Kazarinoff \cite{BK74}) 
and the \textquotedblleft illuminated from exterior\textquotedblright (Bloom and Kazarinoff \cite{BK76}, Liu \cite{L87}). For these cases, decay results have been proved using the method of 
multipliers after generalizing the multipliers to suit the geometry of the obstacle, and although these generalized multipliers lead to volume integrals that were 
avoided before, it turned out that these integrals were actually useful in 
the estimates. Other wider generalizations that include all the above geometries later followed, Strauss \cite{S75} proved uniform local energy decay for the homogeneous linear wave equation in exterior domains in $\mathbb{R}^{n}\;n\geq 3$, provided 
a strictly expansive vector field (now called the Straussian vector field) exists, that leaves $\overline{\Omega}$ strictly invariant, then these Straussian vector fields were 
generalized by Morawetz, Ralston, and Strauss \cite{MRS77} by introducing escape functions, using them to construct a pseudo-differential operator 
$P(x,D)$, and finally setting $Pu$ as a multiplier.

Though the microlocal methods of Morawetz, Ralston, and Strauss provided general results for the linear case, they cannot be 
easily extended to the nonlinear wave equation for which the star-shapedness has been so far a restriction to obtain decay results. Thus, we consider in this 
paper obstacles with explicit geometry that is a direct generalization of the star-shapedness, and we prove the $L^{6}$ decay estimate and thus the scattering for such obstacles using Friedrichs' ABC method after generalizing the multipliers to suit our case. The price to pay 
for such a generalization is that the our multiplier, which is adapted to the geometry of the obstacle, no longer has
exact commutation properties with the wave operator. Therefore, unlike the star-shaped case we do get volume integrals that we deal with using a Gronwall-like argument.

Our main results are:
\begin{theorem}\label{L6decay}
Suppose $u$ solves the nonlinear wave equation \eqref{waveequation} and that $V$ is a non-trapping obstacle with regular boundary that can be illuminated from its exterior by a 
strictly convex body $C$ satisfying the geometric condition: 
\begin{equation}\label{8}
\min_{\partial V}(s_{0}+\rho_{1}-2(\rho_{2M}-\rho_{1}))>0
\end{equation}
where 
\begin{itemize}
 \item $s_{0}$ is the algebraic distance from $\partial C$ to $x\in\partial V$ along the exterior normal to $\partial C$.
 \item $\rho_{2M}=\displaystyle\max_{(\sigma_{1},\sigma_{1})}\rho_{2}$ where $\rho_{i}$ ($i=1,2$) are the radii of curvature of $\partial C$ ($\rho_{2}\geq\rho_{1}$).
\end{itemize}
then  $$\lim_{t\longrightarrow\infty}\int_{\Omega}|u(t,x)|^{6}dx=0$$
\end{theorem}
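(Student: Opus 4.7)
The plan is to adapt the Morawetz multiplier argument used by Bahouri--Shatah \cite{BS98} and Blair--Smith--Sogge \cite{BSS09} in the star-shaped case to the geometry of an obstacle illuminated from exterior. The central object will be a smooth vector field $X$ on $\overline{\Omega}$ whose integral curves are the outward normal rays to $\partial C$, together with a ``radial-like'' function $s(x)$ measuring algebraic distance from $\partial C$ along these rays, so that $s|_{\partial V}=s_{0}$. I would then form a first-order multiplier
\[
Nu \;=\; Au \;+\; X\cdot\nabla u \;+\; \partial_{t}u,
\]
where $A$ is chosen (in analogy with $A=1/|x|$ in the classical Morawetz multiplier) so that the leading-order bulk terms in the quadratic form $Nu\cdot\Box u$ rearrange into the $L^{6}$-weighted density one wants to bound.

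First I would write the pointwise identity $Nu\cdot\Box u=\div_{t,x}(F)+R$, where $F$ is a quadratic energy-momentum flux built from $u,\nabla u,\partial_{t}u$, and $R$ collects the commutator-type remainders generated by the fact that $X$ is no longer exactly conformal Killing for $\Box$. In the star-shaped setup this $R$ vanishes, and here it is precisely the \emph{volume error} that the abstract announces. Using $\Box u=-u^{5}$, the right-hand side $Nu\cdot(-u^{5})$ will produce the desired $u^{6}$ bulk contributions through the $Au$ and $\partial_{t}u$ factors of $N$.

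Next I would integrate the identity over a slab $[0,T]\times\Omega$ and control the three kinds of boundary contributions that arise. The time slices $t=0,T$ are bounded by the conserved energy $E$. On the Dirichlet boundary $[0,T]\times\partial V$ only $|\partial_{n}u|^{2}(X\cdot n)$ survives, since $u=0$, and showing that this term has the right sign is exactly the role of hypothesis \eqref{8}: the quantity $s_{0}+\rho_{1}-2(\rho_{2M}-\rho_{1})$ is what one obtains when writing $X\cdot n$ at a point of $\partial V$ in terms of the algebraic distance $s_{0}$ from $\partial C$ corrected by the principal curvature radii $\rho_{1},\rho_{2M}$ of $\partial C$, so positivity of \eqref{8} forces $X\cdot n>0$ on $\partial V$, the correct replacement of $x\cdot n\geq 0$ from the star-shaped case. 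The remainder $\iint R\,dx\,dt$ will be bounded by a weighted integral of $|\nabla u|^{2}+|\partial_{t}u|^{2}+u^{6}$ supported in a bounded region where the non-Killing defect of $X$ is concentrated.

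The output of these manipulations is a spacetime estimate of the form $\iint_{[0,T]\times\Omega}w(x)\,u^{6}\,dx\,dt\le C\,E+\mathrm{error}(T)$ with a positive weight $w$; a Gronwall-type argument in $T$ absorbs $\mathrm{error}(T)$ using the conserved energy and yields a bound uniform in $T$. Combined with a standard monotonicity/continuity argument as in \cite{BS98,BSS09}, this forces $\int_{\Omega}|u(t,x)|^{6}\,dx\to 0$. The hardest step, and the genuinely new ingredient beyond the star-shaped case, will be the pointwise analysis of $X$ and its first derivatives near $\partial V$: I will have to verify simultaneously that \eqref{8} produces the good sign of the $\partial V$-boundary integrand \emph{and} that the non-commutator remainders are small enough to be closed by the Gronwall step. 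This is precisely where the curvature quantities $\rho_{1},\rho_{2M}$ of $\partial C$ enter essentially, and explains why \eqref{8} involves both $s_{0}$ and the curvature defect $\rho_{2M}-\rho_{1}$.
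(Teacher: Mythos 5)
Your outline captures the right \emph{flavor}---a multiplier built from the normal rays to $\partial C$, a divergence identity, a Gronwall step to absorb the commutator error---but several choices you make are not the ones that make the paper's proof close, and some of them would make your version fail.

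\textbf{The multiplier needs the Morawetz time weight.} You propose $Nu = Au + X\cdot\nabla u + \partial_t u$; the paper uses $Nu = u + \alpha\cdot\nabla u + (t+M)\,\partial_t u$ with $\alpha=(s+\rho_{2M})\nu$. The factor $(t+M)$ on the $\partial_t u$ piece is not cosmetic: it is what converts the resulting spacetime inequality into a statement with an explicit factor of $T$ on the left-hand side, so that after dividing by $T$ the right-hand side decays like $O(1/T)+O(\log T/T)+\mathrm{flux}$. Without it you will at best get a uniform-in-$T$ bound on a weighted spacetime integral of $u^6$, not the pointwise-in-time decay $\int_\Omega u^6(T)\,dx\to 0$ asserted in the theorem. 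Similarly, your integration should be over a light cone $K_{T_1}^{T_2}=\{s+\rho_{2M}\le t+M\}$, not the slab $[0,T]\times\Omega$: the paper first controls the exterior of a large cone via conservation of energy, which yields $\mathrm{flux}(0,T)\to 0$, and this flux decay is precisely what kills the mantle contributions in the Gronwall step.

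\textbf{The role of hypothesis \eqref{8} is not what you say.} You claim that positivity of $s_0+\rho_1-2(\rho_{2M}-\rho_1)$ is ``what one obtains when writing $X\cdot n$'' on $\partial V$, i.e.\ that it ensures the correct sign of the Dirichlet boundary term. That is not the case. The $\partial V$ boundary term reduces to $\tfrac12|\nabla u\cdot n|^2(\alpha\cdot n)$ with $\alpha\cdot n=(s+\rho_{2M})\nu\cdot n$, and its nonnegativity follows from the much weaker standing assumption $\min_{\partial V}(s_0+\rho_1)>0$ together with the geometric fact $\nu\cdot n\ge 0$ (Lemma 2.6); condition \eqref{8} plays no role there. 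Where \eqref{8} is actually indispensable is in the Gronwall closure: the remainder $R$ generates a term proportional to $\displaystyle\sum_i\frac{\rho_{2M}-\rho_i}{s+\rho_i}\Bigl(|\nabla^*u|^2+\bigl|\tfrac{\partial_s((s+\rho_{2M})u)}{s+\rho_{2M}}\bigr|^2\Bigr)$, whose near-boundary piece must be absorbed with a constant
$\eta_0=\max_{\partial V}\bigl(\tfrac{\rho_{2M}-\rho_1}{s_0+\rho_1}+\tfrac{\rho_{2M}-\rho_2}{s_0+\rho_2}\bigr)<1$,
and it is precisely $\min_{\partial V}\bigl(s_0+\rho_1-2(\rho_{2M}-\rho_1)\bigr)>0$ that guarantees $\eta_0<1$. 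The weaker condition $s_0+\rho_1-(\rho_{2M}-\rho_1)>0$ suffices for all the sign conditions on $\div\alpha$ and $H_\alpha$; the factor $2$ in \eqref{8} exists because of this absorption step.

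\textbf{The commutator remainder is not compactly supported.} You write that $\iint R$ will be ``supported in a bounded region where the non-Killing defect of $X$ is concentrated.'' In fact the defect decays only like $1/(s+\rho_i)\sim 1/r$ and is present everywhere; the paper handles this by splitting the interior of the cone into $\{s+\rho_{2M}\le \epsilon t+M\}$ and its complement, getting a harmless $E\log(1+T)/T$ contribution from the far region (via Hardy's inequality and $s+\rho_{2M}\gtrsim r$) and absorbing the near region with the constant $\eta_0$ above. A localization argument as you describe would not close. Finally, the time-slice terms and the mantle term require a careful identification of a completed-square quantity $I(T)$ and an integration by parts producing an extra term $\partial_s\bigl(\tfrac{(s+\rho_1)(s+\rho_2)}{(s+\rho_{2M})^2}\bigr)\ge 0$; none of this appears in your sketch, and without it the terms from $Q(T_2,\cdot)-Q(T_1,\cdot)$ cannot be given a sign or bounded by the energy.

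In short: your high-level skeleton is aligned with the paper, but the missing time weight in the multiplier, the slab-versus-cone choice, the misreading of condition \eqref{8}, and the incorrect localization of the commutator error are collectively fatal to the proposal as written.
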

\begin{rem}
We can construct obstacles that are illuminated from the exterior or from the interior that satisfy the condition \eqref{8}. In particular, it would be easy to see this for illuminated from interior obstacles, where the illuminating body is inside the obstacle and thus $s_{0}>0$, by considering a dog bone like obstacle (Figure \ref{fig:dogbone}) that is a small perturbation of the star-shaped. 
\end{rem}
\begin{rem}
 Remark that if the data has compact support, the computation that proves the above result provides an explicit decay rate for the local energy. In particular, it recovers Bloom-Kazarinoff for the linear
equation (\cite{BK74}) without using the fact that $\Box \partial_{t}u=0$.
\end{rem}

As a result of this decay estimate, we get scattering
\begin{corollary}\label{scattering}
Suppose $u$ solves the nonlinear wave equation \eqref{waveequation} and that $V$ is a non-trapping obstacle with regular boundary that can be illuminated from its exterior 
by a strictly convex body satisfying the geometric condition:
\begin{equation*}
\min_{\partial V}(s_{0}+\rho_{1}-2(\rho_{2M}-\rho_{1}))>0
\end{equation*}
then there exists unique solutions $v_{\pm}$ to the homogeneous linear problem 
 \begin{equation}\label{linearequation}
\left\{
\begin{aligned}
&\Box v=0\;\;in\;\; \mathbb{R}\times\Omega\\
&v|_{\mathbb{R}\times\partial\Omega=0}
\end{aligned}
\right.
\end{equation}
such that
\begin{equation}\label{energyzero}
\lim_{t\rightarrow\infty\pm}E_{0}(u-v_{\pm};t)=0.
\end{equation}
Moreover, $u$ satisfies:
\begin{equation}\label{spacetimeestimate}
\left\|u\right\|_{L^{5}(\mathbb{R};L^{10}(\Omega))}+\left\|u\right\|_{L^{4}(\mathbb{R};L^{12}(\Omega))}<\infty.
\end{equation}
\end{corollary}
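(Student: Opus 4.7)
The plan is to deduce the spacetime bound \eqref{spacetimeestimate} from the $L^{6}$ decay provided by Theorem \ref{L6decay}, following the Bahouri--G\'erard \cite{BG99} and Bahouri--Shatah \cite{BS98} scheme as adapted to exterior domains by Blair--Smith--Sogge \cite{BSS09}, and then to construct $v_{\pm}$ via Duhamel's formula. The essential geometric input has already been spent in producing the decay: the passage from \eqref{8} to scattering requires no further information on $V$ beyond the global Strichartz estimate on the Dirichlet exterior, known for non-trapping obstacles through the work of Smith--Sogge and Burq, in the form
\begin{equation*}
\|w\|_{L^{q}_{t}L^{r}_{x}(I\times\Omega)} \leq C\bigl(\|(\nabla w(t_{0}),\partial_{t}w(t_{0}))\|_{L^{2}(\Omega)} + \|\Box w\|_{L^{1}_{t}L^{2}_{x}(I\times\Omega)}\bigr)
\end{equation*}
for the admissible pairs $(q,r)=(5,10)$ and $(4,12)$ and any interval $I\ni t_{0}$.

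The core step is a bootstrap driven by the H\"older inequality $\|u^{5}\|_{L^{2}_{x}}\leq\|u\|_{L^{12}_{x}}^{4}\|u\|_{L^{6}_{x}}$, which together with Strichartz applied to $\Box u=-u^{5}$ gives on every time interval $I$
\begin{equation*}
\|u\|_{L^{5}_{t}L^{10}_{x}(I)}+\|u\|_{L^{4}_{t}L^{12}_{x}(I)} \leq C E^{1/2}+C\|u\|_{L^{\infty}_{t}L^{6}_{x}(I)}\,\|u\|_{L^{4}_{t}L^{12}_{x}(I)}^{4}.
\end{equation*}
By Theorem \ref{L6decay} I can fix $T$ so large that $\|u\|_{L^{\infty}L^{6}([T,\infty))}\leq\eta$ for any prescribed $\eta>0$. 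Choosing $\eta$ small in terms of $E$ and the Strichartz constant, the inequality above takes the form $X\leq CE^{1/2}+C\eta X^{4}$ with $X:=\|u\|_{L^{4}L^{12}([T,T'])}$; a continuity argument starting from $X=0$ when $T'=T$ closes the bootstrap and yields $\|u\|_{L^{5}L^{10}([T,\infty))}+\|u\|_{L^{4}L^{12}([T,\infty))}\leq 2CE^{1/2}$. The symmetric reasoning handles $(-\infty,-T]$, and on the bounded interval $[-T,T]$ an iteration of local Strichartz over small subintervals --- legitimate since $u$ is a global energy solution (Burq--Lebeau--Planchon \cite{BLP08}) that does not concentrate --- yields a (possibly very large but finite) bound. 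Assembling the three pieces produces \eqref{spacetimeestimate}.

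With \eqref{spacetimeestimate} in hand, the scattering states $v_{\pm}$ are produced through Duhamel's formula: letting $U(t)$ denote the Dirichlet wave propagator on $\Omega$, set
\begin{equation*}
W_{\pm}=\bigl(u(0),\partial_{t}u(0)\bigr)+\int_{0}^{\pm\infty}U(-s)\bigl(0,u^{5}(s)\bigr)\,ds,
\end{equation*}
where the integrals converge in $\dot{H}^{1}(\Omega)\times L^{2}(\Omega)$ by dual Strichartz applied to $u^{5}\in L^{1}_{t}L^{2}_{x}$. The linear evolution $v_{\pm}(t)=U(t)W_{\pm}$ then solves \eqref{linearequation}, and $u(t)-v_{\pm}(t)=\mp\int_{t}^{\pm\infty}U(t-s)(0,u^{5}(s))\,ds$ has energy norm dominated by $\|u\|_{L^{\infty}L^{6}([t,\pm\infty))}\|u\|_{L^{4}L^{12}([t,\pm\infty))}^{4}$, which vanishes as $t\to\pm\infty$ by Theorem \ref{L6decay} combined with \eqref{spacetimeestimate}, yielding \eqref{energyzero}. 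The only genuine technical difficulty I foresee is verifying that the relevant Strichartz estimate is available at the required level of generality on the exterior of a non-trapping obstacle; beyond that, the corollary is a routine transcription of the boundaryless argument, and the new mathematical content of the paper is concentrated entirely in Theorem \ref{L6decay}.
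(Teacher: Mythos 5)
Your argument is correct and essentially reproduces the paper's proof, which the paper itself takes from Blair--Smith--Sogge: the same Strichartz estimate \eqref{strichartz}, the same H\"older bound $\|u^5\|_{L^2_x}\leq\|u\|_{L^{12}_x}^4\|u\|_{L^{6}_x}$ feeding the same continuity (bootstrap) argument on $[T,\infty)$ after $T$ is fixed via Theorem \ref{L6decay}, and the same construction of the scattering data --- your Duhamel integral is precisely the paper's solution of $\Box w=-u^5$ with $E_0(w;t)\to 0$, and uniqueness of $v_\pm$ follows from conservation of $E_0$. The only cosmetic differences are that the paper additionally records the existence half of the wave operator (not strictly needed for the corollary) whereas you go straight to surjectivity, and your Duhamel integrand carries a sign slip (since $\Box u=-u^5$, the forcing should be $(0,-u^5)$), which has no effect on the estimate.
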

{\bf Acknowledgements}. I would like to thank Fabrice Planchon for suggesting the problem and commenting on the manuscript.
\section{The geometry of the obstacle}
We consider in this paper illuminated from exterior obstacles, defined as such (Liu \cite{L87}):
\begin{definition}\label{exterior}
 We say that the boundary of an exterior domain $\Omega=\mathbb{R}^{3}\setminus V$ (or the obstacle $V$) can be illuminated from the exterior if and only if there exists a convex body $C$
containing $\partial V$ with smooth boundary $\partial C$ such that $\partial V$ is filled by a family of non-intersecting rays normal
to $\partial C$. Each ray is completely contained in $\Omega$ in the following sense: for each $x_{0}\in\partial V$ there exists a unique $x_{1}\in\partial C$
and a number $s_{0}(x_{1})\leq0$ such that
$$x_{0}=s_{0}(x_{1})\nu(x_{1})+x_{1},$$
where $\nu$ is the outward unit normal to $\partial C$ at $x_{1}$, and 
$$x=t\nu(x_{1})+x_{1}\in\Omega,\;\;\;\;\;\;t\in[s_{0},\infty).$$
\end{definition}
This definition actually generalizes the following definition of illuminated from interior obstacles introduced by Bloom and Kazarinoff \cite{BK74}:
\begin{definition}\label{illuminated}
We say that a body $V$ can be illuminated from the interior if and only if there exists a smooth convex body $C$ inside $V$ such 
that $extC$ is filled by a family of non-intersecting rays normal to $\partial C$ and such that each ray intersects $\partial V$ exactly 
once. 
\end{definition}
In fact, every body that can be illuminated from the interior can also be illuminated from its exterior by enlarging the original convex body 
(Liu \cite{L87}); thus our results proven for illuminated from exterior obstacles also hold for illuminated from interior. Furthermore, as we 
mentioned in the introduction, these geometries are direct generalizations of the star-shaped. 
More precisely, the illuminated from interior is a generalization of the strict star-shapedness ($x\cdot n>0$). The condition $x\cdot n>0$
implies that each ray
beginning at the origin intersects $\partial V$ exactly once, which means that the interior of a strictly star-shaped obstacle
can be illuminated by a source of light situated at the origin. A small ball centered at the origin is contained in the interior of $V$ and
the above light rays
are perpendicular to the surface of this ball, hence our strictly star-shaped is illuminated from its interior by this ball.\newline
An example of a non star-shaped body that can be illuminated from interior is a \textquotedblleft dog bone\textquotedblright (Figure \ref{fig:dogbone}), and a \textquotedblleft snake-shaped\textquotedblright body (Figure \ref{fig:snake}) is an example of an obstacle
that cannot be illuminated from its interior but can be illuminated from the exterior.
\begin{figure}[htbp]
\centering
 \includegraphics[width=8cm]{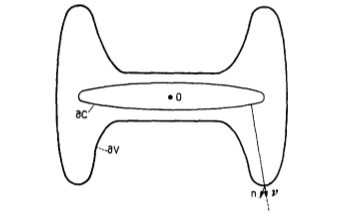}
 \caption{dog bone}
\label{fig:dogbone}
\end{figure} 

\begin{figure}[htbp]
\centering
\includegraphics[width=8cm]{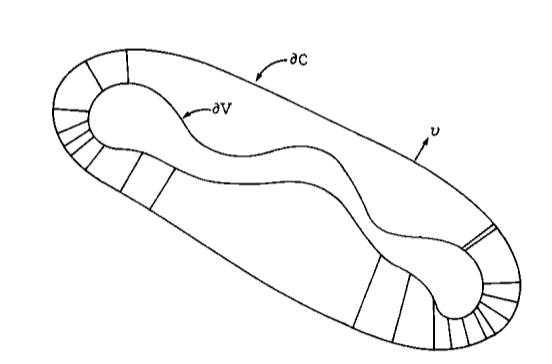}
\caption{snake}
\label{fig:snake}
\end{figure}
\subsection{The illuminating coordinate system}
In this section we will introduce the coordinate system that we are going to use which is the one in the paper by Liu \cite{L87} in which he 
was dealing with similar obstacle for the linear equation. 
We again denote by $\partial C$ the smooth and convex surface of the illuminating body.
Let $x=(x_1,x_2,x_3)$ be Cartesian coordinates in $\mathbb{R}^{3}$ with the origin inside $C$ and $V$. If $X^{0}$ is on $\partial C$,
then in a neighborhood of $X^{0}$ we choose the parametric curves to be the two principal curves on $\partial C$. If the neighborhood of $X^{0}$
is an all-umbilic surface, then we still can choose the parametric curves to be orthogonal to each other. Furthermore, we let the parameters be 
the arc-length parameters. Thus, if $X^{0}\in\partial C$, then $X^{0}$ is given in local coordinates by 
$$X^{0}=X^{0}(\sigma_{1},\sigma_{2})=\left(X^{01}(\sigma_{1},\sigma_{2}),X^{02}(\sigma_{1},\sigma_{2}),X^{03}(\sigma_{1},\sigma_{2})\right),$$
where $\sigma_{1}=const.$ and $\sigma_{2}=const.$ are the parameterizations of the arc-length of the principal curves near $X^{0}$. 
A finite number of $(\sigma_{1},\sigma_{2})$ coordinate patches cover $\partial C$.
Next, corresponding to each point $X^{0}(\sigma_{1},\sigma_{2})$ on $\partial C$, we make the choice $x=X(s,\sigma_{1},\sigma_{2})$, where
\begin{equation}\label{2}
x=X(s,\sigma_{1},\sigma_{2})=s\nu(\sigma_{1},\sigma_{2})+X^{0}(\sigma_{2},\sigma_{2})=sX_{s}+X^{0}(\sigma_{1},\sigma_{2}),
\end{equation}
where $$\nu(\sigma_{1},\sigma_{2})=\left(\frac{X^{0}_{\sigma_{1}}}{|X^{0}_{\sigma_{1}}|}\times\frac{X^{0}_{\sigma_{2}}}{|X^{0}_{\sigma_{2}}|}\right)$$
is the unit exterior normal to $\partial C$, with $X_{\sigma_{i}}^{0}\equiv\partial X^{0}/\partial\sigma_{i},\;i=1,2$. By Definition \ref{exterior}, for each $X^{1}\in\partial V$, 
there is a unique triple $(s_{0},\sigma_{1},\sigma_{2})$ with $s_{0}\leq 0$ such that 
$$X^{1}=s_{0}\nu(\sigma_{1},\sigma_{2})+X^{0}(\sigma_{1},\sigma_{2}),$$ \newline
We denote by $\kappa_{1}$ and $\kappa_{2}$ the principal curvatures at $X^{0}(\sigma_{1},\sigma_{2})$ and $\rho_{i}=\frac{1}{\kappa_{i}}\;(i=1,2)$ the radii of curvature of $\partial C$.
We assume $0<\kappa_{2}\leq \kappa_{1}$ ($\rho_{2}\geq\rho_{1}>0$).
Furthermore, we always assume that
\begin{equation}\label{s+rho}
\min_{\partial V}(s_{0}+\rho_{1})>0
\end{equation}
This condition implies that for every $x\in\Omega$, we have $s+\rho_{i}>0$ ($i=1,2$) since $x=s\nu(\sigma_{1},\sigma_{2})+X^{0}(\sigma_{2},\sigma_{2})$ with $s_{0}\leq s<\infty$, where $s_{0}$ corresponds to the point on $\partial V$ associated with $X^{0}$.

\begin{rem}
 Generically, Definition \ref{exterior} implies the condition \eqref{s+rho} (\cite{BK76} page 26 Lemma 2.1, \cite{L87} page 316 Remark after Lemma 1). Moreover, note that for a star-shaped obstacle, which is in fact an obstacle that is illuminated from the exterior by some ball $B(0,R_{0})$, $s+\rho_{i}$ is nothing but $r$. This explains the significance of this value and 
makes the need of such an assumption in a computation that is a generalization of the star-shaped totally logical.
\end{rem}

Now, we state the following geometrical lemma that we will use later:
\begin{lemma}\label{a0}
There exist a constant $a_{0}>0$ such that $s+\rho_{2M}\geq a_{0}r$.
\end{lemma}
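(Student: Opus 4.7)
\medskip
\noindent\textbf{Proof plan for Lemma \ref{a0}.} The goal is to show that the ratio $(s+\rho_{2M})/r$ is bounded below by a positive constant as $x=s\nu(\sigma_{1},\sigma_{2})+X^{0}(\sigma_{1},\sigma_{2})$ ranges over $\overline{\Omega}$, i.e.\ as $(\sigma_{1},\sigma_{2})$ ranges over the compact surface $\partial C$ and $s\in[s_{0}(\sigma_{1},\sigma_{2}),\infty)$. The plan is to split the analysis into a large-$s$ regime, in which the ratio is close to $1$, and a bounded-$s$ regime, which is compact and where positivity follows from assumption \eqref{s+rho}.

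First I would collect the geometric data. Since $\partial C$ is a smooth compact convex surface, $M:=\max_{\partial C}|X^{0}|<\infty$ and the principal radii $\rho_{1},\rho_{2}$ are continuous positive functions, so $\rho_{2M}>0$ is a finite positive number (and $\rho_{1}$ is bounded below by some $\rho_{\min}>0$). Since $\partial V$ is compact and contained in $C$, the function $s_{0}(\sigma_{1},\sigma_{2})$ is bounded: there exists $S_{0}>0$ with $-S_{0}\le s_{0}\le 0$. From the identity $x=s\nu+X^{0}$ with $|\nu|=1$, the triangle inequality gives the two-sided bound
\begin{equation*}
|s|-M\;\le\;r\;\le\;|s|+M.
\end{equation*}

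Next I would split into cases. For $s\ge 2M$ (large-$s$ regime), the upper bound gives $r\le s+M\le \tfrac{3}{2}s$, hence
\begin{equation*}
\frac{s+\rho_{2M}}{r}\;\ge\;\frac{s}{r}\;\ge\;\frac{2}{3}.
\end{equation*}
For $s\in[s_{0},2M]$ (bounded-$s$ regime), the point $x$ lies in the compact set $\{s\nu+X^{0}:\,-S_{0}\le s\le 2M,\,(\sigma_{1},\sigma_{2})\in\partial C\}$, so $r\le 2M+M=3M$. On the other hand, $\rho_{2M}\ge\rho_{1}$ yields
\begin{equation*}
s+\rho_{2M}\;\ge\;s+\rho_{1}\;\ge\;s_{0}+\rho_{1}\;\ge\;\min_{\partial V}(s_{0}+\rho_{1})\;>\;0,
\end{equation*}
by hypothesis \eqref{s+rho}. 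Setting $m:=\min_{\partial V}(s_{0}+\rho_{1})$, this regime gives $(s+\rho_{2M})/r\ge m/(3M)$.

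Combining the two regimes, one may take
\begin{equation*}
a_{0}\;:=\;\min\!\left(\tfrac{2}{3},\,\tfrac{m}{3M}\right)\;>\;0,
\end{equation*}
which yields the desired uniform inequality. The only real subtlety is checking that no degenerate case arises: that $r$ cannot be zero (which is immediate since the origin lies strictly inside $V$, so $r\ge\mathrm{dist}(0,\partial V)>0$), and that the splitting threshold $s=2M$ can be handled by either branch. I do not expect any serious obstacle here; the lemma is purely a compactness-and-triangle-inequality statement, and the essential analytic input is the positivity assumption \eqref{s+rho}.
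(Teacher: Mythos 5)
Your proof is correct and takes essentially the same route as the paper: split on a threshold in $s$, use $x=s\nu+X^{0}$ and the triangle inequality to control the ratio when $s$ is large, and use compactness of the region where $s$ is bounded together with the positivity hypothesis \eqref{s+rho} (giving $s+\rho_{2M}\ge s_{0}+\rho_{1}\ge m>0$). The paper splits at $s=0$ (i.e.\ $x$ outside versus inside $C$) rather than at your threshold $s=2M$, and states the bounded-regime positivity constant $c$ without spelling out its source, but the substance is identical; note only that you should avoid reusing the symbol $M$, which the paper has already reserved for the cone offset constant.
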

\begin{proof}
The existence of $a_{0}$ is due to the boundedness of the the obstacle $V$ and the illuminating body $C$. In fact, if $s\geq0$ ($x\in\overline{extC}$) then $r-r_{0}(\sigma_{1},\sigma_{2})\leq s<r$ where $r_{0}(\sigma_{1},\sigma_{2})=|X^{0}(\sigma_{1},\sigma_{2})|$ and if $s<0$ ($x\in\overset{\circ}{C}\cap\overline{\Omega}$) then $s+\rho_{2M}\geq c$ for some positive constant $c$.
\end{proof}
We also recall the following lemmas about the coordinate 
system, these lemmas were originally stated and proved by Bloom and Kazarinoff \cite{BK74} for illuminated from interior obstacles, and then 
they were extended by Liu \cite{L87} for illuminated from exterior obstacles.
\begin{lemma}\label{lemma1}
The level surfaces $s=const.$, $\sigma_{i}=const.$ $(i=1,2)$ define a set of local coordinate systems in $\overline{\Omega}$ with 
each ray $\left\{x:\sigma_{1}=const.,\sigma_{2}=const.,\;and\; s_{0}\leq s<\infty\right\}$ normally incident on $\partial C$ and 
$$\left|\frac{D(x_{1},x_{2},x_{3})}{D(s,\sigma_{1},\sigma_{2})}\right|=\Lambda(\kappa_{1}s+1)(\kappa_{2}s+1)>0$$ where $\Lambda=|X^{0}_{\sigma_{1}}||X^{0}_{\sigma_{2}}|$.
\end{lemma}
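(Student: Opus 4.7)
The plan is a direct Jacobian computation followed by an appeal to the inverse function theorem. Starting from the parametrization
\[
x = X(s,\sigma_1,\sigma_2) = s\,\nu(\sigma_1,\sigma_2) + X^0(\sigma_1,\sigma_2),
\]
one immediately has $X_s = \nu$, so each curve $\{\sigma_1,\sigma_2 = \text{const.}\}$ has tangent vector the exterior normal to $\partial C$ at the corresponding point; this establishes normal incidence on $\partial C$ by the very construction of the coordinate system.

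For the tangential derivatives, $X_{\sigma_i} = s\,\nu_{\sigma_i} + X^0_{\sigma_i}$ for $i=1,2$. Because $(\sigma_1,\sigma_2)$ are arc-length parameters along the principal curves on the convex surface $\partial C$ with exterior normal $\nu$, Rodrigues' formula applies and gives $\nu_{\sigma_i} = \kappa_i X^0_{\sigma_i}$, so $X_{\sigma_i} = (1+\kappa_i s)\,X^0_{\sigma_i}$. The triple product then factors cleanly:
\[
\det\bigl[X_s,\,X_{\sigma_1},\,X_{\sigma_2}\bigr] = (1+\kappa_1 s)(1+\kappa_2 s)\;\nu \cdot (X^0_{\sigma_1}\times X^0_{\sigma_2}).
\]
Since the principal directions are orthogonal, $X^0_{\sigma_1}\times X^0_{\sigma_2}$ is parallel to $\nu$ with magnitude $|X^0_{\sigma_1}||X^0_{\sigma_2}| = \Lambda$, yielding $\nu \cdot (X^0_{\sigma_1}\times X^0_{\sigma_2}) = \Lambda$ and hence the claimed identity
\[
\left|\frac{D(x_1,x_2,x_3)}{D(s,\sigma_1,\sigma_2)}\right| = \Lambda\,(\kappa_1 s + 1)(\kappa_2 s + 1).
\]

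Positivity on $\overline{\Omega}$ is then immediate from the standing assumption \eqref{s+rho}: on $\partial V$ one has $s_0+\rho_1>0$, so $1 + \kappa_1 s_0 > 0$; since $\kappa_2 \leq \kappa_1$ the second factor is at least as large, and for $s \geq s_0$ both factors only grow. With the Jacobian nonvanishing, the inverse function theorem promotes $X$ to a local diffeomorphism on each $(\sigma_1,\sigma_2)$-patch, and a finite family of such patches, inherited from the ones covering the compact surface $\partial C$, yields the asserted set of local coordinate systems on $\overline{\Omega}$.

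The only substantive point requiring care is the sign in Rodrigues' formula: with the convention $\kappa_i>0$ for the convex surface $\partial C$ equipped with exterior normal $\nu$, the shape operator has to be read in the orientation that produces the classical area element $(1+\kappa_1 s)(1+\kappa_2 s)\,dA_0$ for the outward parallel surface at signed distance $s$, rather than the opposite sign. A quick check on the sphere $|x|=R$ (where $\nu = X^0/R$ forces $\nu_\sigma = (1/R)X^0_\sigma = \kappa X^0_\sigma$ in arc-length principal coordinates) pins this down; beyond this sign bookkeeping the argument is entirely routine.
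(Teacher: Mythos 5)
Your computation is correct, and it is worth noting that the paper itself does not prove this lemma: it is recalled verbatim from Bloom--Kazarinoff \cite{BK74} and Liu \cite{L87}, so there is no in-paper argument to compare against. Your Jacobian calculation is the natural one and matches the formulas the paper later records in Remark \ref{expressions}, in particular $\partial_{\sigma_i}\nu = \kappa_i X^0_{\sigma_i}$ and $X_{\sigma_i}=(\kappa_i s+1)X^0_{\sigma_i}$, which implicitly confirms both the Rodrigues sign convention you pin down on the sphere and the factorization of the triple product. The positivity argument via \eqref{s+rho} together with $\kappa_2\le\kappa_1$ is exactly what is needed, and since $\Lambda>0$ and both factors are positive, the determinant already equals its own absolute value, so the stated identity follows.

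One small refinement: the inverse function theorem plus a finite atlas on the compact surface $\partial C$ gives you local diffeomorphisms on the $(s,\sigma_1,\sigma_2)$-patches, but the claim that these patches actually carpet all of $\overline{\Omega}$ is not purely a consequence of the nonvanishing Jacobian. It rests on the illuminated-from-exterior structure of Definition \ref{exterior} (the rays are non-intersecting and fill $\partial V$, each ray stays in $\Omega$ for $s\ge s_0$) together with convexity of $C$ (which supplies the unique foot point for $x\in\overline{\mathrm{ext}\,C}$). You wave at this when you say the patches are ``inherited,'' but it would be cleaner to cite the definition explicitly for the covering/injectivity part and reserve the Jacobian computation for showing that the resulting map is a regular change of variables. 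Beyond that bookkeeping the argument is complete.
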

\begin{lemma}\label{lemma2}
$\nu\cdot n\geq 0$ on $\partial V$.
\end{lemma}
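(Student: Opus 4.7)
The plan is to derive the lemma directly from the geometric content of Definition \ref{exterior}: the ray emanating from $x_0 \in \partial V$ in the direction $\nu$ lies in $\overline{\Omega}$ and strictly in $\Omega$ beyond its starting point. This forces $\nu$ not to point strictly into $V$ at $x_0$, which is exactly $\nu\cdot n \ge 0$.

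First, I would fix $x_0 \in \partial V$ and let $X^0 \in \partial C$, $s_0 \le 0$ be the associated data, so that $x_0 = s_0\nu(\sigma_1,\sigma_2) + X^0$ and
$$x(t) := t\nu + X^0 = x_0 + (t-s_0)\nu, \qquad t \in [s_0,\infty),$$
lies in $\overline{\Omega}$, with $x(t) \in \Omega$ for every $t > s_0$. Thus the smooth curve $t \mapsto x(t)$ starts at $x_0 \in \partial V$ with velocity $\nu$ and immediately enters $\mathbb{R}^3 \setminus V$.

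Next, I would pick a smooth local defining function $\phi$ for $V$ in a neighborhood of $x_0$: $\phi < 0$ inside $V$, $\phi > 0$ in $\Omega$, $\phi(x_0) = 0$, and $\nabla\phi(x_0) = |\nabla\phi(x_0)|\,n$. The scalar function $t \mapsto \phi(x(t))$ then satisfies $\phi(x(s_0)) = 0$ and $\phi(x(t)) \ge 0$ for $t \in [s_0,s_0+\varepsilon)$. Its right derivative at $s_0$ is therefore nonnegative, and equals $|\nabla\phi(x_0)|\,(n\cdot\nu)$; this yields the inequality.

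I do not foresee any serious difficulty: the argument is a purely first-order geometric observation, and uses neither the non-trapping hypothesis nor the curvature condition \eqref{8}. The only situation requiring comment is tangency of the ray to $\partial V$ at $x_0$, which corresponds to $n\cdot\nu = 0$ and is already covered by the nonstrict inequality. As a sanity check, when $C$ is a small ball centered at the origin one has $\nu = x/|x|$, so $\nu\cdot n \ge 0$ reduces to the familiar star-shapedness condition $x\cdot n \ge 0$.
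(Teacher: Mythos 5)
Your argument is correct. The paper itself does not supply a proof of Lemma \ref{lemma2}: it simply ``recalls'' the statement and attributes it to Bloom--Kazarinoff \cite{BK74} (illuminated from interior) and Liu \cite{L87} (illuminated from exterior), with no reproduction of the reasoning. You have instead given a self-contained, purely first-order argument: the illuminating ray $t\mapsto x_0+(t-s_0)\nu$ leaves $x_0\in\partial V$ with velocity $\nu$ and stays in $\overline\Omega$, so composing with a local defining function $\phi$ for $V$ (vanishing on $\partial V$, negative inside $V$, positive in $\Omega$, with $\nabla\phi(x_0)$ parallel to $n$) gives a nonnegative function on $[s_0,s_0+\varepsilon)$ vanishing at $s_0$, whose right derivative $|\nabla\phi(x_0)|\,(n\cdot\nu)$ is therefore nonnegative. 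This requires only the ``completely contained in $\Omega$'' clause of Definition~\ref{exterior}; it uses neither non-trapping nor the curvature hypothesis \eqref{8}, as you note. The slight terminological wrinkle that at $t=s_0$ the point $x_0$ lies on $\partial V$ rather than in the open exterior is harmless, since $\phi(x_0)=0$ still holds and positivity for $t>s_0$ is what drives the inequality. This is essentially the standard proof one finds in the cited sources, but the paper does not reproduce it, so your writeup genuinely adds a proof where the paper only cites.
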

\begin{rem}\label{expressions}
We recall the following calculus formulas that we will use in our computation $(i=1,2)$: 
$$\partial_{\sigma_{i}}\nu\equiv\frac{\partial\nu}{\partial\sigma_{i}}=\kappa_{i}X^{0}_{\sigma_{i}}$$ 
and thus
$$X_{\sigma_{i}}=(\kappa_{i}s+1)X^{0}_{\sigma_{i}}.$$
Moreover, remark that for any scalar function $f=f(s,\sigma_{1},\sigma_{2})$ and every vector field written in the new coordinate system 
$$F=F^{0}\nu+F^{1}\frac{X_{\sigma_{1}}^{0}}{\left|X_{\sigma_{1}}^{0}\right|}+F^{2}\frac{X_{\sigma_{2}}^{0}}{\left|X_{\sigma_{2}}^{0}\right|},$$ we can express the gradient and the divergence as follows:
$$\nabla f=\partial_{s}f\nu+\frac{1}{\left|X_{\sigma_{1}}\right|}\partial_{\sigma_{1}}f\frac{X^{0}_{\sigma_{1}}}{\left|X^{0}_{\sigma_{1}}\right|}+\frac{1}{\left|X_{\sigma_{2}}\right|}\partial_{\sigma_{2}}f\frac{X^{0}_{\sigma_{2}}}{\left|X^{0}_{\sigma_{2}}\right|}$$
and
$$\div F=\frac{1}{\left|X_{\sigma_{1}}\right|\left|X_{\sigma_{2}}\right|}\left[\partial_{s}\left(\left|X_{\sigma_{1}}\right|\left|X_{\sigma_{2}}\right|F^{0}\right)+\partial_{\sigma_{1}}\left(\left|X_{\sigma_{2}}\right|F^{1}\right)+\partial_{\sigma_{2}}\left(\left|X_{\sigma_{1}}\right|F^{2}\right)\right]$$
In particular, we have $$\nabla s=\nu$$ $$\nabla f\cdot\nu=\partial_{s}f$$
$$\left|\nabla f\right|^{2}=\left(\partial_{s}f\right)^{2}+\frac{1}{(\kappa_{1}s+1)^{2}|X^{0}_{\sigma_{1}}|^{2}}(\partial_{\sigma_{1}}f)^{2}+\frac{1}{(\kappa_{2}s+1)^{2}|X^{0}_{\sigma_{2}}|^{2}}(\partial_{\sigma_{2}}f)^{2}$$
Denote by
$$|\nabla^{*}_{i} f|^{2}=\frac{1}{(\kappa_{i}s+1)^{2}|X^{0}_{\sigma_{i}}|^{2}}(\partial_{\sigma_{i}}f)^{2},\;i=1,2$$
and
$$|\nabla^{*} f|^{2}=|\nabla^{*}_{1} f|^{2}+|\nabla^{*}_{2} f|^{2}$$
thus $$|\nabla f|^{2}=|\partial_{s}f|^{2}+|\nabla^{*} f|^{2}.$$
\end{rem}
\section{Proof of the $L^{6}$ decay estimate (Theorem \ref{L6decay})}
We must show that for any $\epsilon_{0}>0$, there exists $T_{0}$ such that whenever $t\geq T_{0}$, 
$$\frac{1}{6}\int_{\Omega}|u(t,x)|^{6}dx\leq\epsilon_{0}.$$
First, we begin by multiplying the wave equation $\Box u+u^{5}=0$ by $\partial_{t}u$, we get the following divergence or energy identity
$$\partial_{t}(e(u))-\div(\nabla u\partial_{t}u)=0$$
where $$e(u)=\frac{1}{2}(|\partial_{t}u|^{2}+|\nabla u|^{2})+\frac{1}{6}|u|^{6}$$ denotes the energy density. Integrating over the region $\{(x,t);s+\rho_{2M}>t+M,0\leq t\leq T\}$, where $\rho_{2M}=\displaystyle\max_{(\sigma_{1},\sigma_{2})}\rho_{2}$ 
and $M$ is a positive constant chosen such that $M\geq\rho_{2M}$ and the illuminating body $C\subset\{x;s+\rho_{2M}\leq M\}$, and using the divergence theorem and
the Dirichlet boundary condition, we get
\begin{equation}\label{exteriorcone}
\int_{s+\rho_{2M}>T+M}e(u)(T,x)dx+\frac{1}{\sqrt{2}}flux(0,T)\leq\int_{s+\rho_{2M}>M}e(u)(0,x)dx
\end{equation}
where the flux on the mantle is defined by:
$$flux(a,b)=\int_{M_{a}^{b}}\left(\frac{1}{2}\left|\nu\partial_{t}u+\nabla u\right|^{2}+\frac{u^{6}}{6}\right)d\sigma$$
with $$M_{a}^{b}=\left\{(x,t); s+\rho_{2M}=t+M, a\leq t\leq b\right\}$$
Since the solution has finite energy, we may select $M$ large so that the right hand side of \eqref{exteriorcone} is less than 
$\frac{\epsilon_{0}}{2}$. Hence, it will suffice to show the existence of $T_{0}$ such that whenever $T>T_{0}$ we have 
$$\frac{1}{6}\int_{s+\rho_{2M}\leq T+M}|u(T,x)|^{6}dx\leq\frac{\epsilon_{0}}{2}$$
This is a consequence of the following proposition:
\begin{proposition}\label{differentialinequality}
Suppose $u$ solves the nonlinear wave equation \eqref{waveequation} and that $V$ is a non-trapping obstacle with regular boundary that can be illuminated from its exterior 
by a strictly convex body satisfying the geometric condition:
\begin{equation*}
\min_{\partial V}(s_{0}+\rho_{1}-2(\rho_{2M}-\rho_{1}))>0
\end{equation*}
then
\begin{align*}
&\sqrt{\eta_{0}}\int_{s+\rho_{2M}\leq\epsilon T+M}\left(\left|\nabla^{*}u\right|^{2}+\left|\frac{\partial_{s}((s+\rho_{2M})u)}{s+\rho_{2M}}\right|^{2}\right)dx+\int_{s+\rho_{2M}\leq T+M}\frac{u^{6}(T,x)}{3}dx\\
&\;\leq 2c_{1}\beta E+\frac{1}{T}\left(C_{0}E+C_{2}E\ln(1+T)+2(c_{2}+c_{3}T)flux(0,T)\right)\\
&\;\;\;+\frac{\eta_{0}}{T}\int_{0}^{T}\int_{s+\rho_{2M}\leq\epsilon t+M}\left(|\nabla^{*}u|^{2}+\left|\frac{\partial_{s}((s+\rho_{2M})u)}{s+\rho_{2M}}\right|^{2}\right)dxdt\notag
\end{align*}
for some arbitrary $0<\beta<1$ and where $0<\eta_{0}, \epsilon<1$ and all the other constants depend on the geometry of the obstacle.
\end{proposition}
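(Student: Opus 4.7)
The plan is to apply Friedrichs' ABC method with a multiplier adapted to the illuminating coordinate system, in the role that the conformal Morawetz multiplier $r\,\partial_r u + u + t\,\partial_t u$ plays in the star-shaped setting. The natural replacement substitutes the radial distance $r$ by $s+\rho_{2M}$ and the radial derivative $\partial_r$ by $\partial_s$, so I would work with a multiplier of the form
\[ Nu \;=\; (s+\rho_{2M})\,\partial_s u \;+\; u \;+\; t\,\partial_t u, \]
up to the tuning of an additional zero-order coefficient if it proves necessary to balance lower-order terms. Multiplying $\Box u + u^5 = 0$ by $Nu$ and rewriting $(\Box u)\,Nu$ as a space-time divergence modulo lower-order pieces produces an identity of the schematic form $\partial_t(\mathcal{E}) + \mathrm{div}(\mathcal{Q}) + \mathcal{R} = 0$, in which $\mathcal{E}$ contains the $u^6$ contribution with a positive coefficient at $t=T$, $\mathcal{Q}$ encodes the flux across space-like hypersurfaces, and $\mathcal{R}$ collects the volume terms arising from the failure of exact commutation of the adapted vector field with $\Box$.

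I would then integrate this identity over the truncated cone $K_T = \{(x,t) : 0 \leq t \leq T,\ s+\rho_{2M} \leq t+M\}$, the same region used to derive \eqref{exteriorcone}. Its mantle $\{s+\rho_{2M} = t+M\}$ is null for the flat metric, so the corresponding surface integrals reorganize into the quantity $\mathrm{flux}(0,T)$ already controlled by \eqref{exteriorcone}, producing the $2(c_2 + c_3 T)\,\mathrm{flux}(0,T)$ term; its bottom slice at $t = 0$ is bounded by $E$ dressed by polynomial-in-$t$ weights, which is the origin of the $C_0 E$ and $C_2 E \ln(1+T)$ contributions; and its top slice at $t = T$ produces precisely $\tfrac13\int u^6(T,x)\,dx$. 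The boundary integral on $\mathbb{R}\times\partial V$ reduces, by the Dirichlet condition, to an integrand proportional to $[(s_0+\rho_1) - 2(\rho_{2M}-\rho_1)]\,(\nu\cdot n)\,|\nabla u|^2$ plus lower-order corrections; this is the precise place where the geometric assumption \eqref{8} combined with Lemma \ref{lemma2} forces the correct sign and allows the boundary term to be discarded.

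The non-star-shapedness manifests in the bulk remainder $\mathcal{R}$. Unlike the Euclidean case where the volume contribution is simply $|\nabla^{\mathrm{ang}} u|^2 / r$, here $\mathcal{R}$ splits into a dominant non-negative piece that I would identify as a positive multiple of $|\nabla^* u|^2 + |\partial_s((s+\rho_{2M})u)/(s+\rho_{2M})|^2$ (the latter arising from the combination $(s+\rho_{2M})\partial_s u + u$, exactly as in the classical conformal identity), plus an error piece whose coefficients involve the curvature differences $\rho_2 - \rho_1$ and the weights $s+\rho_i$. Using Lemma \ref{a0} to compare $r$ with $s+\rho_{2M}$ and restricting attention to the inner region $\{s+\rho_{2M}\leq \epsilon t + M\}$ for small $\epsilon$, these errors can be absorbed either into $2c_1\beta E$ through a small-parameter Cauchy--Schwarz or into the averaged term
\[ \frac{\eta_0}{T}\int_0^T\!\!\int_{s+\rho_{2M}\leq \epsilon t + M}\!\!\Bigl(|\nabla^* u|^2 + \Bigl|\frac{\partial_s((s+\rho_{2M})u)}{s+\rho_{2M}}\Bigr|^2\Bigr)dx\,dt, \]
which is exactly the self-referential object ultimately closed off by the Gronwall-type argument performed downstream of this proposition.

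The main obstacle, as I see it, is twofold. First, one has to pin down the precise coefficients in $Nu$ (and any auxiliary zero-order term) so that $\mathcal{R}$ splits cleanly into a non-negative Morawetz piece and an error controllable by the global energy and by the very quantity appearing in the last term of the inequality; this requires an honest computation of the commutator $[\Box,\,(s+\rho_{2M})\partial_s]$ in the illuminating frame, using the gradient and divergence formulae of Remark \ref{expressions} together with $X_{\sigma_i} = (\kappa_i s + 1) X^0_{\sigma_i}$. Second, and more delicate, is the sign analysis on $\partial V$: since the multiplier is no longer $x \cdot \nabla u$, star-shapedness is unavailable, and the non-negativity of the surface integrand rests entirely on the sharp comparison $s_0 + \rho_1 > 2(\rho_{2M} - \rho_1)$ of \eqref{8}. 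Everything else — the $\ln(1+T)$ budget from the timelike weight in $N$, the conversion of the mantle integrand into $\mathrm{flux}(0,T)$, and the Cauchy--Schwarz splittings with parameter $\beta$ — reduces to careful but routine time-weighted integrations.
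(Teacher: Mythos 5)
Your choice of multiplier and the overall Friedrichs ABC scaffolding match the paper, but you have misplaced the role of the geometric hypothesis \eqref{8}, and that misattribution would derail the sign analysis if you tried to carry out the computation. With $\alpha = (s+\rho_{2M})\nu$ and the Dirichlet condition, the boundary term on $\mathbb{R}\times\partial V$ collapses to $\tfrac12\int\int_{\partial V}|\nabla u\cdot n|^2\,(\alpha\cdot n)\,d\sigma\,dt$, so the integrand is proportional to $(s_0+\rho_{2M})(\nu\cdot n)\,|\partial_n u|^2$; the factor is $s_0+\rho_{2M}=(s_0+\rho_1)+(\rho_{2M}-\rho_1)$, and its positivity follows already from the weak, generic condition $\min_{\partial V}(s_0+\rho_1)>0$ together with Lemma~\ref{lemma2}, with no need for the sharper comparison in \eqref{8}. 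The strong condition $\min_{\partial V}(s_0+\rho_1-2(\rho_{2M}-\rho_1))>0$ is needed elsewhere, in the \emph{bulk}: after computing $\operatorname{div}\alpha = 3 + \sum_{i}\tfrac{\rho_{2M}-\rho_i}{s+\rho_i}$, one needs (a) $5-\operatorname{div}\alpha\ge 0$ and $s+2\rho_j-\rho_{2M}>0$ so the volume pieces attached to $u^6$ and to the integrated-by-parts $u^2$ term are nonnegative (these hold with the factor $1$ instead of $2$), and, crucially, (b) $\eta_0 = \max_{\partial V}\bigl(\tfrac{\rho_{2M}-\rho_1}{s_0+\rho_1}+\tfrac{\rho_{2M}-\rho_2}{s_0+\rho_2}\bigr) < 1$, which is what the factor $2$ buys, and without which the ensuing Gronwall-type iteration does not close. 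If you pin the sign budget on the boundary integral instead, you will both impose an unnecessary constraint on $\partial V$ and fail to notice that the actual threshold $\eta_0<1$ is what dictates the numerical constant $2$ in \eqref{8}.

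A secondary but connected point: the paper integrates the divergence identity over a truncated cone between $t=T_1$ and $t=T_2$, and then sets $T_1=\beta T$, $T_2=T$; the term $2c_1\beta E$ in the stated inequality is precisely the bottom-slice contribution at $t=\beta T$, bounded by the conserved energy times the weight $T_1+M$. Starting the integration at $t=0$, as you propose, would make the bottom slice contribute a term of size $\sim M\cdot E$ (independent of $\beta$), and would force you to manufacture the $\beta$-tuned smallness some other way; you would also lose the freedom, exploited after the proposition, to push $\beta\to 0$ once everything else has been fixed. I would also flag, though it is cosmetic, that the temporal coefficient should be $(t+M)\partial_t u$ rather than $t\,\partial_t u$: the constant $M$ is the same one chosen so that the mantle $\{s+\rho_{2M}=t+M\}$ of the truncated cone is exactly the null surface that converts the mantle integral into $\mathrm{flux}(0,T)$; with a mismatched additive constant that identification becomes awkward. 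Finally, the bulk remainder analysis you sketch is directionally correct but substantially compressed: the paper's argument requires several explicit integrations by parts in $s$ (using $dx = \Lambda(\kappa_1 s+1)(\kappa_2 s+1)\,ds\,d\sigma_1 d\sigma_2$) to trade $u\,\partial_s u$ terms for nonnegative ones, and requires the $\epsilon$-splitting of $\Omega$ into $\{s+\rho_{2M}\le\epsilon t+M\}$ and its complement in order to separate a logarithmically integrable energy contribution from the self-referential averaged term.
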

As a result of Proposition \ref{differentialinequality}, we will show that given $\epsilon_{0}>0$, $\exists T_{0}$ such that $\forall T\geq T_{0}$ we have
\begin{equation}\label{gronwall}
\frac{1}{T}\int_{0}^{T}\int_{s+\rho_{2M}\leq\epsilon T+M}\left(\left|\nabla^{*}u\right|^{2}+\left|\frac{\partial_{s}((s+\rho_{2M})u)}{s+\rho_{2M}}\right|^{2}\right)dxdt<\frac{\epsilon_{0}}{2}.
\end{equation}
For simplicity, let:
$$\phi(t)=\int_{s+\rho_{2M}\leq\epsilon T+M}\left(\left|\nabla^{*}u\right|^{2}+\left|\frac{\partial_{s}((s+\rho_{2M})u)}{s+\rho_{2M}}\right|^{2}\right)dx,$$
$$\psi(T)=\frac{1}{T}\int_{0}^{T}\phi(t)dt,$$
$$\gamma(t)=\frac{2c_{1}\beta E}{\eta}+\frac{1}{t\eta}\left(C_{0}E+C_{2}E\ln(1+t)+2(c_{2}+c_{3}t)flux(0,t)\right).$$
with $\eta=\sqrt{\eta_{0}}$.\newline 
Thus we want to show the existence of $T_{0}$ such that $\forall T\geq T_{0}$ we have $\psi(T)<\frac{\epsilon_{0}}{2}$.\newline
$$\psi(T)+T\frac{d\psi}{dt}=\phi(T)$$
From the differential inequality in Proposition \ref{differentialinequality} we have  
$$\phi(T)\leq\gamma(T)+\eta\psi(T)$$
so $$\frac{d\psi}{dt}+(1-\eta)\frac{\psi(T)}{T}\leq\frac{\gamma(T)}{T}$$
$$\frac{1}{T^{1-\eta}}\frac{d(T^{1-\eta}\psi)}{dt}\leq\frac{\gamma(T)}{T}$$
$$\frac{d(T^{1-\eta}\psi)}{dt}\leq\frac{\gamma(T)}{T^{\eta}}$$
$$T^{1-\eta}\psi(T)\leq\psi(1)+\int_{1}^{T}\frac{\gamma(t)}{t^{\eta}}dt$$
Hence,
\begin{equation}\label{psi0}
\psi(T)\leq J(T)+\frac{\psi(1)}{T^{1-\eta}}
\end{equation}
with $$J(T)=\frac{1}{T^{1-\eta}}\int_{1}^{T}\frac{\gamma(t)}{t^{\eta}}dt.$$
But, note that $flux(0,t)\xrightarrow[t\rightarrow\infty]{}0$ by the classical energy-conservation law on the exterior 
of a truncated cone stated above. Thus, $\exists t_{0}$ such that $\forall t\geq t_{0}$ we have 
$$\frac{1}{t\eta}\left(C_{0}E+C_{2}E\ln(1+t)+2(c_{2}+c_{3}T)flux(0,t)\right)<\frac{\epsilon_{0}(1-\eta)}{12},$$
and choose $\beta$ such that $$\frac{2c_{1}\beta E}{\eta}=\frac{\epsilon_{0}(1-\eta)}{12}.$$
Hence, $$\exists t_{0}\;\mbox{such that}\;\forall t\geq t_{0}\;\mbox{we have}\;\gamma(t)<\frac{\epsilon_{0}(1-\eta)}{6}$$
and $\gamma$ is bounded: $$\gamma(t)\leq M,\;\forall t.$$ 
Moreover, 
$$\exists T_{0}>t_{0}\;\mbox{such that}\;\forall t>T_{0},\;\mbox{we have}\;\frac{1}{t^{1-\eta}}\frac{M}{1-\eta}(t_{0}^{1-\eta}-1)<\frac{\epsilon_{0}}{6}\;\mbox{and}\;\frac{\psi(1)}{t^{1-\eta}}<\frac{\epsilon_{0}}{6}.$$
Thus, for all $T>T_{0}$,
\begin{align*}
J(T)&=\frac{1}{T^{1-\eta}}\int_{1}^{t_{0}}\frac{\gamma(t)}{t^{\eta}}dt+\frac{1}{T^{1-\eta}}\int_{t_{0}}^{T}\frac{\gamma(t)}{t^{\eta}}dt\\
&\leq\frac{1}{T^{1-\eta}}\frac{M}{1-\eta}(t_{0}^{1-\eta}-1)+\frac{1}{T^{1-\eta}}\frac{\epsilon_{0}}{6}(T^{1-\eta}-t_{0}^{1-\eta})\\
&<\frac{\epsilon_{0}}{3}
\end{align*} 
and by \eqref{psi0}, $\psi(T)<\frac{\epsilon_{0}}{2}$ which is \eqref{gronwall}.
\newline\newline
Now, from Proposition \ref{differentialinequality}, we have  
$$\int_{s+\rho_{2M}\leq T+M}\frac{u^{6}(T,x)}{3}dx\leq\eta\gamma(T)+\eta^{2}\psi(T)<\gamma(T)+\psi(T)<\epsilon_{0}$$
Hence, for all $T\geq T_{0}$, we have $$\int_{s+\rho_{2M}\leq T+M}\frac{u^{6}(T,x)}{3}dx<\epsilon_{0}$$
which ends the proof of Theorem \ref{L6decay}.
\section{Proof of the differential inequality (Proposition \ref{differentialinequality})}
The method we are going to use to prove our result is the method of multipliers and we will generalize the Morawetz multipliers that were used for star-shaped obstacles in way that suits our 
obstacle.
\subsection{Divergence Identity and Integral Equality}
 We multiply the wave equation by $$(u+\alpha\cdot\nabla u+(t+M)\partial_{t}u),$$ where $M\geq\rho_{2M}$ is the positive constant chosen in the previous section and $\alpha$ is a vector field defined as follows:
\begin{equation}\label{4'}
\alpha=(s+\rho_{2M})\nu
\end{equation}
and we get the following divergence identity:
$$\partial_{t}Q+ \div P+R=0$$
where
\begin{equation*}
\left\{
\begin{aligned}
&Q=(t+M)\frac{|\nabla u|^2}{2}+(t+M)\frac{|u|^{6}}{6}+(t+M)\frac{|\partial_{t}u|^{2}}{2}+\partial_{t}u(\alpha\cdot\nabla u)+(\partial_{t}u)u\\
&P=\left(\frac{|\nabla u|^{2}}{2}+\frac{|u|^{6}}{6}-\frac{|\partial_{t}u|^{2}}{2}\right)\alpha-\left((t+M)\partial_{t}u+\alpha\cdot\nabla u+u\right)\nabla u\\
&R=\left(\div\alpha-3\right)\frac{|\partial_{t}u|^{2}}{2}+\left(1-\div\alpha\right)\frac{|\nabla u|^{2}}{2}+\left(5-\div\alpha\right)\frac{|u|^{6}}{6}+H_{\alpha}(\nabla u,\nabla u)
\end{aligned}
\right.
\end{equation*}
where $H_{\alpha}(\nabla u,\nabla u)=\displaystyle\sum_{i,j=1}^{3}\partial_{i}\alpha_{j}\partial_{i}u\partial_{j}u=\nabla u\cdot\left(\left(\nabla u\cdot\nabla\right)\alpha\right)$.\newline\newline
Integrating the divergence identity over the truncated cone
$$K_{T_{1}}^{T_{2}}=\left\{(x,t); s+\rho_{2M}\leq t+M, T_{1}\leq t\leq T_{2}\right\},\;\;0<T_{1}<T_{2},$$ 
and applying the divergence theorem, we get
\begin{align}\label{1}
\int_{D(T_{2})}Q(T_{2},x)dx&-\int_{D(T_{1})}Q(T_{1},x)dx-\frac{1}{\sqrt{2}}\int_{M_{T_{1}}^{T_{2}}}\left(Q-P\cdot\nu\right)d\sigma\notag\\
&-\int_{T_{1}}^{T_{2}}\int_{\partial V}P\cdot nd\sigma dt+\int_{K_{T_{1}}^{T_{2}}}Rdxdt=0
\end{align}
where $d\sigma$ denotes the Lebesgue measure on the corresponding surface and $n$ is the outward pointing unit normal vector
 to $\partial V$; and where
$$D(T_{i})=\left\{x\in\Omega; s+\rho_{2M}\leq T_{i}+M\right\}$$
and
$$M_{T_{1}}^{T_{2}}=\left\{(x,t); s+\rho_{2M}=t+M, T_{1}\leq t\leq T_{2}\right\}.$$
\subsection{The differential inequality}
Now, we deal with the terms of the integral equality \eqref{1} in order to get the desired differential inequality.\newline\newline
\textbf{The boundary term}\newline\newline
By the Dirichlet boundary condition, we get:
\begin{align}\label{boundaryterm}
 -\int_{T_{1}}^{T_{2}}\int_{\partial V}P\cdot nd\sigma dt&=-\int^{T_{2}}_{T_{1}}\int_{\partial V}\frac{|\nabla u|^{2}}{2}\alpha\cdot n-(\alpha\cdot\nabla u)(\nabla u\cdot n)d\sigma dt\notag\\
&=\int^{T_{2}}_{T_{1}}\int_{\partial V}\frac{1}{2}|\nabla u\cdot n|^{2}(\alpha\cdot n)d\sigma dt\geq 0
\end{align}
since $\alpha\cdot n=(s+\rho_{2M})\nu\cdot n$ with $s+\rho_{2M}>0$ (by our assumption) and $\nu\cdot n\geq 0$ (Lemma \ref{lemma2}).\newline\newline
\textbf{The terms on the time slices}\newline\newline
We have
$$Q(T_{2},x)=(T_{2}+M)\frac{|\nabla u|^2}{2}+(T_{2}+M)\frac{|u|^{6}}{6}+(T_{2}+M)\frac{|\partial_{t}u|^{2}}{2}+\partial_{t}u(\alpha\cdot\nabla u)+(\partial_{t}u)u $$
Introduce
\begin{align*}
I(T_{2})&=\frac{1}{4}(T_{2}+M+(s+\rho_{2M}))\left[\partial_{t}u+\frac{\partial_{s}((s+\rho_{2M})u)}{{s+\rho_{2M}}}\right]^{2}+\frac{1}{4}(T_{2}+M-(s+\rho_{2M}))\left[\partial_{t}u-\frac{\partial_{s}((s+\rho_{2M})u)}{{s+\rho_{2M}}}\right]^{2}\\
&\;\;\;\;+ (T_{2}+M)\frac{u^{6}}{6}\\
&=(T_{2}+M)\left(\frac{|\partial_{t}u|^{2}}{2}+\frac{|\partial_{s}u|^{2}}{2}+\frac{u^{6}}{6}\right)+(\partial_{t}u)(s+\rho_{2M})\partial_{s}u+(\partial_{t}u)u+\frac{T_{2}+M}{2}\left(\frac{u^{2}}{(s+\rho_{2M})^{2}}+\frac{2u\partial_{s}u}{s+\rho_{2M}}\right)
\end{align*}
by Remark \ref{expressions}, we have 
$$|\nabla u|^{2}=|\partial_{s}u|^{2}+|\nabla^{*} u|^{2}$$
where $$|\nabla^{*} u|^{2}=|\nabla^{*}_{1} u|^{2}+|\nabla^{*}_{2} u|^{2}$$ and
$$|\nabla^{*}_{i} u|^{2}=\frac{1}{(\kappa_{i}s+1)^{2}|X^{0}_{\sigma_{i}}|^{2}}(\partial_{\sigma_{i}}u)^{2},\;i=1,2$$ 
so
$$I(T_{2})+(T_{2}+M)\frac{|\nabla^{*}u|^{2}}{2}=Q(T_{2},\cdot)+\frac{T_{2}+M}{2}\left(\frac{u^{2}}{(s+\rho_{2M})^{2}}+\frac{2u\partial_{s}u}{s+\rho_{2M}}\right)$$
Thus,
\begin{align*}
\int_{D(T_{2})}Q(T_{2},x)dx&=\int_{D(T_{2})}I(T_{2})+(T_{2}+M)\frac{|\nabla^{*}u|^{2}}{2}dx-\frac{T_{2}+M}{2}\int_{D(T_{2})}\frac{2u\partial_{s}u}{s+\rho_{2M}}+\frac{u^{2}}{(s+\rho_{2M})^{2}}dx\\
&=\int_{D(T_{2})}I(T_{2})+(T_{2}+M)\frac{|\nabla^{*}u|^{2}}{2}dx-\frac{T_{2}+M}{2}\int_{D(T_{2})}\frac{\partial_{s}((s+\rho_{2M})u^{2})}{(s+\rho_{2M})^{2}}dx
\end{align*}
Now, integrating by parts and using Dirichlet boundary condition, we get
\begin{equation*}
-\int_{D(T_{2})}\frac{\partial_{s}((s+\rho_{2M})u^{2})}{(s+\rho_{2M})^{2}}dx=\int_{D(T_{2})}(s+\rho_{2M})u^{2}\partial_{s}\left(\frac{(s+\rho_{1})(s+\rho_{2})}{(s+\rho_{2M})^{2}}\right)\frac{\Lambda}{\rho_{1}\rho_{2}}dsd\sigma_{1}d\sigma_{2}-\int_{\partial D(T_{2})}\frac{u^{2}}{T_{2}+M}dS_{2}
\end{equation*}
where $dS_{2}$ is the measure on $\partial D(T_{2})$ and 
$$\partial D(T_{2})=\left\{x,s+\rho_{2M}=T_{2}+M\right\}.$$
But
\begin{align*}
\partial_{s}\left(\frac{(s+\rho_{1})(s+\rho_{2})}{(s+\rho_{2M})^{2}}\right)&=\frac{(2s+\rho_{1}+\rho_{2})(s+\rho_{2M})-2(s+\rho_{1})(s+\rho_{2})}{(s+\rho_{2M})^{3}}\\
&=\frac{1}{(s+\rho_{2M})^{3}}\left(\displaystyle\sum_{i,j=1,i\neq j}^{2}(\rho_{2M}-\rho_{i})(s+\rho_{j})\right)\geq 0
\end{align*}
so
\begin{align}\label{Q2}
\int_{D(T_{2})}Q(T_{2},x)dx&=\int_{D(T_{2})}I(T_{2})+(T_{2}+M)\frac{|\nabla^{*}u|^{2}}{2}dx-\frac{T_{2}+M}{2}\int_{\partial D(T_{2})}\frac{u^{2}}{T_{2}+M}dS_{2}\notag\\
&\;\;+\frac{T_{2}+M}{2}\int_{D(T_{2})}\frac{u^{2}}{(s+\rho_{2M})^{2}}\left(\displaystyle\sum_{i,j=1,i\neq j}^{2}(\rho_{2M}-\rho_{i})(s+\rho_{j})\right)\frac{\Lambda}{\rho_{1}\rho_{2}}dsd\sigma_{1}d\sigma_{2}\notag\\
&=\int_{D(T_{2})}I(T_{2})+(T_{2}+M)\frac{|\nabla^{*}u|^{2}}{2}dx-\frac{1}{2}\int_{\partial D(T_{2})}u^{2}dS_{2}\notag\\
&\;\;+\frac{T_{2}+M}{2}\int_{D(T_{2})}\left(\displaystyle\sum_{i=1}^{2}\frac{\rho_{2M}-\rho_{i}}{s+\rho_{i}}\right)\frac{u^{2}}{(s+\rho_{2M})^{2}}dx
\end{align}
Similarly we get 
\begin{align}\label{Q1}
\int_{D(T_{1})}Q(T_{1},x)dx&=\int_{D(T_{1})}I(T_{1})+(T_{1}+M)\frac{|\nabla^{*}u|^{2}}{2}dx-\frac{1}{2}\int_{\partial D(T_{1})}u^{2}dS_{1}\notag\\
&\;\;+\frac{T_{1}+M}{2}\int_{D(T_{1})}\left(\displaystyle\sum_{i=1}^{2}\frac{\rho_{2M}-\rho_{i}}{s+\rho_{i}}\right)\frac{u^{2}}{(s+\rho_{2M})^{2}}dx
\end{align}
\textbf{The term on the mantle}\newline\newline
On the mantle $M_{T_{1}}^{T_{2}}$, we have $s+\rho_{2M}=t+M$, and recall that $\nabla u\cdot\nu=\partial_{s}u$ (Remark \ref{expressions}), thus we get:
\begin{align*}
Q-P\cdot\nu&=(s+\rho_{2M})(\partial_{t}u+\partial_{s}u)^{2}+u(\partial_{t}u+\partial_{s}u)\\
&=(s+\rho_{2M})\left(\partial_{t}u+\partial_{s}u+\frac{u}{s+\rho_{2M}}\right)^{2}-u(\partial_{t}u+\partial_{s}u)-\frac{u^{2}}{s+\rho_{2M}}
\end{align*}   
so
\begin{align}\label{mantleterm}
-\frac{1}{\sqrt{2}}\int_{M_{T_{1}}^{T_{2}}}(Q-P\cdot\nu)d\sigma&=-\frac{1}{\sqrt{2}}\int_{M_{T_{1}}^{T_{2}}}(s+\rho_{2M})\left(\partial_{t}u+\partial_{s}u+\frac{u}{s+\rho_{2M}}\right)^{2}d\sigma\notag\\
&\;\;\;\;+\frac{1}{\sqrt{2}}\int_{M_{T_{1}}^{T_{2}}}u(\partial_{t}u+\partial_{s}u)+\frac{u^{2}}{s+\rho_{2M}}d\sigma
\end{align} 
Now setting $\overline{u}(y)=u(s+\rho_{2M}-M,y)$ on the mantle, we have 
$$\nabla\overline{u}=\nu\partial_{t}u+\nabla u\;\;\mbox{and}\;\;\partial_{s}\overline{u}=\partial_{t}u+\partial_{s}u,$$ 
the second term in \eqref{mantleterm} can be written as such
\begin{align*}
\frac{1}{\sqrt{2}}\int_{M_{T_{1}}^{T_{2}}}u(\partial_{t}u+\partial_{s}u)&+\frac{u^{2}}{s+\rho_{2M}}d\sigma=\int_{\overline{M}_{T_{1}}^{T_{2}}}\left(\overline{u}\partial_{s}\overline{u}+\frac{\overline{u}^{2}}{s+\rho_{2M}}\right)dy=\int_{\overline{M}_{T_{1}}^{T_{2}}}\frac{\partial_{s}((s+\rho_{2M})^{2}\overline{u}^{2})}{2(s+\rho_{2M})^{2}}dy
\end{align*}
where $$\overline{M}_{T_{1}}^{T_{2}}=\left\{x,T_{1}+M\leq s+\rho_{2M}\leq T_{2}+M\right\}$$
Integrating by parts and using the Dirichlet boundary condition, we get
\begin{align*}
\int_{\overline{M}_{T_{1}}^{T_{2}}}\frac{\partial_{s}((s+\rho_{2M})^{2}\overline{u}^{2})}{2(s+\rho_{2M})^{2}}dy&=\int_{\overline{M}_{T_{1}}^{T_{2}}}\frac{\partial_{s}((s+\rho_{2M})^{2}\overline{u}^{2})}{2(s+\rho_{2M})^{2}}\frac{\Lambda}{\rho_{1}\rho_{2}}(s+\rho_{1})(s+\rho_{2})dsd\sigma_{1}d\sigma_{2}\\
&=-\int_{\overline{M}_{T_{1}}^{T_{2}}}\frac{1}{2}(s+\rho_{2M})^{2}\overline{u}^{2}\partial_{s}\left(\frac{(s+\rho_{1})(s+\rho_{2})}{(s+\rho_{2M})^{2}}\right)\frac{\Lambda}{\rho_{1}\rho_{2}}dsd\sigma_{1}d\sigma_{2}\\
&\;\;\;\;+\frac{1}{2}\int_{\partial D(T_{2})}\overline{u}^{2}dS_{2}-\frac{1}{2}\int_{\partial D(T_{1})}\overline{u}^{2}dS_{1}\\
&=-\frac{1}{2}\int_{\overline{M}_{T_{1}}^{T_{2}}}\left(\displaystyle\sum_{i,j=1,i\neq j}^{2}(\rho_{2M}-\rho_{i})(s+\rho_{j})\right)\frac{\overline{u}^{2}}{s+\rho_{2M}}\frac{\Lambda}{\rho_{1}\rho_{2}}dsd\sigma_{1}d\sigma_{2}\\
&\;\;\;\;+\frac{1}{2}\int_{\partial D(T_{2})}\overline{u}^{2}dS_{2}-\frac{1}{2}\int_{\partial D(T_{1})}\overline{u}^{2}dS_{1}\\
\end{align*}
Thus the term on the mantle \eqref{mantleterm} becomes
\begin{align}\label{mantletermfinal}
-\frac{1}{\sqrt{2}}\int_{M_{T_{1}}^{T_{2}}}(Q-P\cdot\nu)d\sigma&=-\frac{1}{\sqrt{2}}\int_{M_{T_{1}}^{T_{2}}}(s+{\rho_{2M}})(\partial_{t}u+\partial_{s}u+\frac{u}{s+\rho_{2M}})^{2}d\sigma\\
&\;\;\;\;-\frac{1}{2}\int_{\overline{M}_{T_{1}}^{T_{2}}}\left(\displaystyle\sum_{i,j=1,i\neq j}^{2}(\rho_{2M}-\rho_{i})(s+\rho_{j})\right)\frac{\overline{u}^{2}}{s+\rho_{2M}}\frac{\Lambda}{\rho_{1}\rho_{2}}dsd\sigma_{1}d\sigma_{2}\notag\\
&\;\;\;\;+\frac{1}{2}\int_{\partial D(T_{2})}u^{2}dS_{2}-\frac{1}{2}\int_{\partial D(T_{1})}u^{2}dS_{1}\notag
\end{align}
\textbf{The remainder term}\newline\newline
 We have $\alpha=(s+\rho_{2M})\nu$ thus by Remark \ref{expressions}, we get
\begin{align*}
\div\alpha&=\frac{1}{(\kappa_{1}s+1)(\kappa_{2}s+1)}\partial_{s}((\kappa_{1}s+1)(\kappa_{2}s+1)(s+\rho_{2M}))\\
&=1+\frac{\kappa_{1}(s+\rho_{2M})}{\kappa_{1}s+1}+\frac{\kappa_{2}(s+\rho_{2M})}{\kappa_{2}s+1}\\
&=3+\frac{\rho_{2M}-\rho_{1}}{s+\rho_{1}}+\frac{\rho_{2M}-\rho_{2}}{s+\rho_{2}}
\end{align*}
since $s+\rho_{i}>0$ and $\rho_{2M}\geq\rho_{2}\geq\rho_{1}$ then $\div\alpha-3\geq 0$ and
\begin{equation}\label{R1}
\int_{K_{T_{1}}^{T_{2}}}\left(\div\alpha-3\right)\frac{|\partial_{t}u|^{2}}{2}dxdt\geq 0
\end{equation}
Now, in the following remainder term
\begin{equation*}
\int_{K_{T_{1}}^{T_{2}}}\left(5-\div\alpha\right)\frac{|u|^{6}}{6}dxdt
\end{equation*}
we have
\begin{align*}
5-\div\alpha&=2-\frac{\rho_{2M}-\rho_{1}}{s+\rho_{1}}-\frac{\rho_{2M}-\rho_{2}}{s+\rho_{2}}
\end{align*}
Imposing the following geometric condition
\begin{equation*}
\min_{\partial V}(s_{0}+\rho_{1}-(\rho_{2M}-\rho_{1}))>0
\end{equation*}
we get that (recall that $s\geq s_{0}$) 
$$\frac{\rho_{2M}-\rho_{1}}{s+\rho_{1}}+\frac{\rho_{2M}-\rho_{2}}{s+\rho_{2}}<2$$
and thus
\begin{equation}\label{R2}
\int_{K_{T_{1}}^{T_{2}}}\left(5-\div\alpha\right)\frac{|u|^{6}}{6}dxdt\geq 0
\end{equation}
We still have to deal with the following term in $R$:
$$H_{\alpha}(\nabla u,\nabla u)+\left(1-\div\alpha\right)\frac{|\nabla u|^{2}}{2}$$
We have $H_{\alpha}(\nabla u,\nabla u)=\nabla u\cdot\left(\left(\nabla u\cdot\nabla\right)\alpha\right)$; and by Remark \ref{expressions}, we have
$$\nabla u=\partial_{s}u\nu+\frac{1}{(\kappa_{1}s+1)\left|X^{0}_{\sigma_{1}}\right|^{2}}\partial_{\sigma_{1}}uX^{0}_{\sigma_{1}}+\frac{1}{(\kappa_{2}s+1)\left|X^{0}_{\sigma_{2}}\right|^{2}}\partial_{\sigma_{2}}uX^{0}_{\sigma_{2}}$$
$$\nabla u \cdot\nabla=\partial_{s} u\partial_{s}+\frac{1}{(\kappa_{1}s+1)^{2}|X^{0}_{\sigma_{1}}|^{2}}\partial_{\sigma_{1}}u\partial_{\sigma_{1}}+\frac{1}{(\kappa_{2}s+1)^{2}|X^{0}_{\sigma_{2}}|^{2}}\partial_{\sigma_{2}}u\partial_{\sigma_{2}}$$
\begin{align*}
(\nabla u \cdot\nabla)\alpha=&\left(\left(\nabla u \cdot\nabla \right)\left(s+\rho_{2M}\right)\right)\nu+\left(s+\rho_{2M}\right)\left(\left(\nabla u \cdot\nabla \right)\nu\right)\\
 =&\partial_{s} u\nu+(s+\rho_{2M})\left(\frac{\kappa_{1}}{(\kappa_{1}s+1)^{2}|X^{0}_{\sigma_{1}}|^{2}}\partial_{\sigma_{1}}uX^{0}_{\sigma_{1}}+\frac{\kappa_{2}}{(\kappa_{2}s+1)^{2}|X^{0}_{\sigma_{2}}|^{2}}\partial_{\sigma_{2}}uX^{0}_{\sigma_{2}}\right)
\end{align*}
Hence
\begin{align*}
H_{\alpha}(\nabla u,\nabla u)=\nabla u\cdot\left(\left(\nabla u\cdot\nabla\right)\alpha\right)=\left(\partial_{s}u\right)^{2}+\frac{\kappa_{1}(s+\rho_{2M})}{(\kappa_{1}s+1)^{3}|X^{0}_{\sigma_{1}}|^{2}}(\partial_{\sigma_{1}}u)^{2}+\frac{\kappa_{2}(s+\rho_{2M})}{(\kappa_{2}s+1)^{3}|X^{0}_{\sigma_{2}}|^{2}}(\partial_{\sigma_{2}}u)^{2}
\end{align*}
Using 
\begin{align*}
|\nabla u|^{2}&=(\partial_{s}u)^{2}+\frac{1}{(\kappa_{1}s+1)^{2}|X^{0}_{\sigma_{1}}|^{2}}(\partial_{\sigma_{1}}u)^{2}+\frac{1}{(\kappa_{2}s+1)^{2}|X^{0}_{\sigma_{2}}|^{2}}(\partial_{\sigma_{2}}u)^{2}\\
&=(\partial_{s}u)^{2}+\left|\nabla_{1}^{*}u\right|^{2}+\left|\nabla_{2}^{*}u\right|^{2}
\end{align*}
we get
\begin{align*}
 H_{\alpha}(\nabla u,\nabla u)&=|\nabla u|^{2}+\frac{\rho_{2M}-\rho_{1}}{s+\rho_{1}}|\nabla^{*}_{1}u|^{2}+\frac{\rho_{2M}-\rho_{2}}{s+\rho_{2}}|\nabla^{*}_{2}u|^{2}\geq 0
\end{align*}
Moreover, we have 
$$\left(1-\div\alpha\right)\frac{|\nabla u|^{2}}{2}=-|\nabla u|^{2}-\left(\frac{\rho_{2M}-\rho_{1}}{s+\rho_{1}}+\frac{\rho_{2M}-\rho_{2}}{s+\rho_{2}}\right)\frac{|\nabla u|^{2}}{2}$$
Hence
\begin{align}\label{10}
\int_{K_{T_{1}}^{T_{2}}}&H_{\alpha}(\nabla u,\nabla u)+(1-\div\alpha)\frac{|\nabla u|^{2}}{2}dxdt\\
&=\int_{K_{T_{1}}^{T_{2}}}\left(\displaystyle\sum_{i=1}^{2}\frac{\rho_{2M}-\rho_{i}}{s+\rho_{i}}|\nabla_{i}^{*}u|^{2}\right)dxdt-\int_{K_{T_{1}}^{T_{2}}}\left(\displaystyle\sum_{i=1}^{2}\frac{\rho_{2M}-\rho_{i}}{s+\rho_{i}}\right)\frac{|\nabla u|^{2}}{2}dxdt\notag
\end{align}
Recall that $$|\nabla u|^{2}=|\partial_{s}u|^{2}+|\nabla^{*}u|^{2}$$
and note that $$\frac{\partial_{s}((s+\rho_{2M})u)}{s+\rho_{2M}}=\partial_{s}u+\frac{u}{s+\rho_{2M}},$$ hence
$$|\nabla u|^{2}=|\nabla^{*}u|^{2}+\left|\frac{\partial_{s}((s+\rho_{2M})u)}{s+\rho_{2M}}\right|^{2}-\left|\frac{u}{s+\rho_{2M}}\right|^{2}-\frac{2u\partial_{s}u}{s+\rho_{2M}}$$
Substituting this in \eqref{10}, we get:
\begin{align}\label{10aftersub}
&\int_{K_{T_{1}}^{T_{2}}}H_{\alpha}(\nabla u,\nabla u)+(1-\div\alpha)\frac{|\nabla u|^{2}}{2}dxdt\notag\\
&\;\;=\int_{K_{T_{1}}^{T_{2}}}\left(\displaystyle\sum_{i=1}^{2}\frac{\rho_{2M}-\rho_{i}}{s+\rho_{i}}|\nabla_{i}^{*}u|^{2}-\frac{1}{2}\left(\displaystyle\sum_{i=1}^{2}\frac{\rho_{2M}-\rho_{i}}{s+\rho_{i}}\right)|\nabla^{*}u|^{2}\right)dxdt\notag\\
&\;\;\;-\frac{1}{2}\int_{K_{T_{1}}^{T_{2}}}\left(\displaystyle\sum_{i=1}^{2}\frac{\rho_{2M}-\rho_{i}}{s+\rho_{i}}\right)\left|\frac{\partial_{s}((s+\rho_{2M})u)}{s+\rho_{2M}}\right|^{2}dxdt\\
&\;\;\;+\frac{1}{2}\int_{K_{T_{1}}^{T_{2}}}\left(\displaystyle\sum_{i=1}^{2}\frac{\rho_{2M}-\rho_{i}}{s+\rho_{i}}\right)\left|\frac{u}{s+\rho_{2M}}\right|^{2}dxdt\notag\\
&\;\;\;+\frac{1}{2}\int_{K_{T_{1}}^{T_{2}}}\left(\displaystyle\sum_{i=1}^{2}\frac{\rho_{2M}-\rho_{i}}{s+\rho_{i}}\right)\frac{2u\partial_{s}u}{s+\rho_{2M}}dxdt\notag\\
&\;\;=\Rmnum{1}+\Rmnum{2}+\Rmnum{3}+\Rmnum{4}\notag
\end{align}
\begin{align}\label{Rmnum1}
\Rmnum{1}=\frac{1}{2}\int_{K_{T_{1}}^{T_{2}}}\left(\frac{\rho_{2M}-\rho_{1}}{s+\rho_{1}}-\frac{\rho_{2M}-\rho_{2}}{s+\rho_{2}}\right)|\nabla^{*}_{1}u|^{2}+\left(\frac{\rho_{2M}-\rho_{2}}{s+\rho_{2}}-\frac{\rho_{2M}-\rho_{1}}{s+\rho_{1}}\right)|\nabla^{*}_{2}u|^{2}dxdt
\end{align}
Integrating by parts the last term and using the Dirichlet boundary condition, we get:
\begin{align*}
\Rmnum{4}&=\displaystyle\sum_{i=1}^{2}\frac{1}{2}\int_{K_{T_{1}}^{T_{2}}}\frac{\rho_{2M}-\rho_{i}}{s+\rho_{i}}\frac{2u\partial_{s}u}{s+\rho_{2M}}dxdt\\
&=\displaystyle\sum_{i,j=1,i\neq j}^{2}\frac{1}{2}\int^{T_{2}}_{T_{1}}\int_{s+\rho_{2M}\leq t+M}(\rho_{2M}-\rho_{i})\partial_{s}(u^{2})\frac{s+\rho_{j}}{s+\rho_{2M}}\frac{\Lambda}{\rho_{1}\rho_{2}}dsd\sigma_{1}d\sigma_{2}dt\\
&=-\displaystyle\sum_{i,j=1,i\neq j}^{2}\frac{1}{2}\int^{T_{2}}_{T_{1}}\int_{s+\rho_{2M}\leq t+M}(\rho_{2M}-\rho_{i})u^{2}\partial_{s}\left(\frac{s+\rho_{j}}{s+\rho_{2M}}\right)\frac{\Lambda}{\rho_{1}\rho_{2}}dsd\sigma_{1}d\sigma_{2}dt\\
&\;\;\;+\displaystyle\sum_{i,j=1,i\neq j}^{2}\frac{1}{2}\int_{\overline{M}^{T_{2}}_{T_{1}}}(\rho_{2M}-\rho_{i})u^{2}\frac{s+\rho_{j}}{s+\rho_{2M}}\frac{\Lambda}{\rho_{1}\rho_{2}}dsd\sigma_{1}d\sigma_{2}\\
&=-\displaystyle\sum_{i,j=1,i\neq j}^{2}\frac{1}{2}\int^{T_{2}}_{T_{1}}\int_{s+\rho_{2M}\leq t+M}(\rho_{2M}-\rho_{i})(\rho_{2M}-\rho_{j})\frac{u^{2}}{(s+\rho_{2M})^{2}}\frac{\Lambda}{\rho_{1}\rho_{2}}dsd\sigma_{1}d\sigma_{2}dt\\
&\;\;\;+\displaystyle\sum_{i,j=1,i\neq j}^{2}\frac{1}{2}\int_{\overline{M}^{T_{2}}_{T_{1}}}(\rho_{2M}-\rho_{i})(s+\rho_{j})\frac{u^{2}}{s+\rho_{2M}}\frac{\Lambda}{\rho_{1}\rho_{2}}dsd\sigma_{1}d\sigma_{2}
\end{align*} 
and we have 
\begin{align*}
\Rmnum{3}=\displaystyle\sum_{i,j=1,i\neq j}^{2}\frac{1}{2}\int^{T_{2}}_{T_{1}}\int_{s+\rho_{2M}\leq t+M}(\rho_{2M}-\rho_{i})(s+\rho_{j})\frac{u^{2}}{(s+\rho_{2M})^{2}}\frac{\Lambda}{\rho_{1}\rho_{2}}dsd\sigma_{1}d\sigma_{2}dt
\end{align*}
Hence,
\begin{align*}
\Rmnum{3}+\Rmnum{4}&=\frac{1}{2}\int_{\overline{M}^{T_{2}}_{T_{1}}}\displaystyle\sum_{i,j=1,i\neq j}^{2}(\rho_{2M}-\rho_{i})(s+\rho_{j})\frac{u^{2}}{s+\rho_{2M}}\frac{\Lambda}{\rho_{1}\rho_{2}}dsd\sigma_{1}d\sigma_{2}\\
&\;\;\;+\frac{1}{2}\int^{T_{2}}_{T_{1}}\int_{s+\rho_{2M}\leq t+M}\displaystyle\sum_{i,j=1,i\neq j}^{2}(\rho_{2M}-\rho_{i})(s+2\rho_{j}-\rho_{2M})\frac{u^{2}}{(s+\rho_{2M})^{2}}\frac{\Lambda}{\rho_{1}\rho_{2}}dsd\sigma_{1}d\sigma_{2}dt
\end{align*}
Now substituting in \eqref{10aftersub} we get
\begin{align}\label{Hexpression}
\int_{K_{T_{1}}^{T_{2}}}&H_{\alpha}(\nabla u,\nabla u)+(1-\div\alpha)\frac{|\nabla u|^{2}}{2}dxdt\notag\\
&=\frac{1}{2}\int_{K_{T_{1}}^{T_{2}}}\left(\frac{\rho_{2M}-\rho_{1}}{s+\rho_{1}}-\frac{\rho_{2M}-\rho_{2}}{s+\rho_{2}}\right)|\nabla^{*}_{1}u|^{2}+\left(\frac{\rho_{2M}-\rho_{2}}{s+\rho_{2}}-\frac{\rho_{2M}-\rho_{1}}{s+\rho_{1}}\right)|\nabla^{*}_{2}u|^{2}dxdt\notag\\
&\;-\frac{1}{2}\int_{K_{T_{1}}^{T_{2}}}\left(\displaystyle\sum_{i=1}^{2}\frac{\rho_{2M}-\rho_{i}}{s+\rho_{i}}\right)\left|\frac{\partial_{s}((s+\rho_{2M})u)}{s+\rho_{2M}}\right|^{2}dxdt\\
&\;+\frac{1}{2}\int_{\overline{M}^{T_{2}}_{T_{1}}}\displaystyle\sum_{i,j=1,i\neq j}^{2}(\rho_{2M}-\rho_{i})(s+\rho_{j})\frac{u^{2}}{s+\rho_{2M}}\frac{\Lambda}{\rho_{1}\rho_{2}}dsd\sigma_{1}d\sigma_{2}\notag\\
&\;+\frac{1}{2}\int^{T_{2}}_{T_{1}}\int_{s+\rho_{2M}\leq t+M}\displaystyle\sum_{i,j=1,i\neq j}^{2}(\rho_{2M}-\rho_{i})(s+2\rho_{j}-\rho_{2M})\frac{u^{2}}{(s+\rho_{2M})^{2}}\frac{\Lambda}{\rho_{1}\rho_{2}}dsd\sigma_{1}d\sigma_{2}dt\notag
\end{align}
and note that since we imposed the geometric condition
$$\min_{\partial V}(s_{0}+\rho_{1}-(\rho_{2M}-\rho_{1}))>0$$
we have $s+2\rho_{j}-\rho_{2M}>0$ and thus the last term in \eqref{Hexpression} is nonnegative.\newline\newline
\textbf{The differential inequality}\newline\newline
Now, summing up all the terms (\eqref{boundaryterm}, \eqref{Q2}, \eqref{Q1}, \eqref{mantletermfinal}, \eqref{R1}, \eqref{R2}, and \eqref{Hexpression}) in the integral equality \eqref{1} and dropping the nonnegative terms, we get
the following differential inequality:
\begin{align*}
\int_{D(T_{2})}&I(T_{2})+(T_{2}+M)\frac{\left|\nabla^{*}u\right|^{2}}{2}dx\\
&\;\leq\int_{D(T_{1})}I(T_{1})+(T_{1}+M)\frac{\left|\nabla^{*}u\right|^{2}}{2}dx\\
&\;\;\;+\frac{T_{1}+M}{2}\int_{D(T_{1})}\left(\displaystyle\sum_{i=1}^{2}\frac{\rho_{2M}-\rho_{i}}{s+\rho_{i}}\right)\frac{u^{2}}{(s+\rho_{2M})^{2}}dx\\
&\;\;\;+\frac{1}{\sqrt{2}}\int_{M_{T_{1}}^{T_{2}}}(s+\rho_{2M})\left(\partial_{t}u+\partial_{s}u+\frac{u}{s+\rho_{2M}}\right)^{2}d\sigma\\
&\;\;\;+\frac{1}{2}\int_{K_{T_{1}}^{T_{2}}}\left(\frac{\rho_{2M}-\rho_{2}}{s+\rho_{2}}-\frac{\rho_{2M}-\rho_{1}}{s+\rho_{1}}\right)|\nabla^{*}_{1}u|^{2}+\left(\frac{\rho_{2M}-\rho_{1}}{s+\rho_{1}}-\frac{\rho_{2M}-\rho_{2}}{s+\rho_{2}}\right)|\nabla^{*}_{2}u|^{2}dxdt\notag\\
&\;\;\;+\frac{1}{2}\int_{K_{T_{1}}^{T_{2}}}\left(\displaystyle\sum_{i=1}^{2}\frac{\rho_{2M}-\rho_{i}}{s+\rho_{i}}\right)\left|\frac{\partial_{s}((s+\rho_{2M})u)}{s+\rho_{2M}}\right|^{2}dxdt\\
\end{align*}
Recall that for $i=1,2$ we have: 
\begin{align*}
 I(T_{i})&=\frac{1}{4}(T_{i}+M+(s+\rho_{2M}))\left[\partial_{t}u+\frac{\partial_{s}((s+\rho_{2M})u)}{s+\rho_{2M}}\right]^{2}+\frac{1}{4}(T_{i}+M-(s+\rho_{2M}))\left[\partial_{t}u-\frac{\partial_{s}((s+\rho_{2M})u)}{s+\rho_{2M}}\right]^{2}\\
&\;\;\;\;+(T_{i}+M)\frac{u^{6}}{6}
\end{align*}
Thus
$$I(T_{2})\geq\frac{1}{2}(T_{2}+M-(s+\rho_{2M}))\left((\partial_{t}u)^{2}+\left|\frac{\partial_{s}((s+\rho_{2M})u)}{s+\rho_{2M}}\right|^{2}\right)+(T_{2}+M)\frac{u^{6}}{6}$$
we also have,
$$I(T_{1})\leq\frac{1}{2}(T_{1}+M+(s+\rho_{2M}))\left((\partial_{t}u)^{2}+\left|\frac{\partial_{s}((s+\rho_{2M})u)}{s+\rho_{2M}}\right|^{2}\right)+(T_{1}+M)\frac{u^{6}}{6}$$
so we get
\begin{align}\label{control}
&\int_{D(T_{2})}\frac{1}{2}(T_{2}+M-(s+\rho_{2M}))\left((\partial_{t}u)^{2}+\left|\frac{\partial_{s}((s+\rho_{2M})u)}{s+\rho_{2M}}\right|^{2}\right)+(T_{2}+M)\frac{u^{6}}{6}+(T_{2}+M)\frac{\left|\nabla^{*}u\right|^{2}}{2}dx\notag\\
&\;\leq\int_{D(T_{1})}\frac{1}{2}(T_{1}+M+(s+\rho_{2M}))\left((\partial_{t}u)^{2}+\left|\frac{\partial_{s}((s+\rho_{2M})u)}{s+\rho_{2M}}\right|^{2}+\frac{u^{6}}{3}+\left|\nabla^{*}u\right|^{2}\right)dx\\
&\;\;+\frac{T_{1}+M}{2}\int_{D(T_{1})}\left(\displaystyle\sum_{i=1}^{2}\frac{\rho_{2M}-\rho_{i}}{s+\rho_{i}}\right)\frac{u^{2}}{(s+\rho_{2M})^{2}}dx+\frac{1}{\sqrt{2}}\int_{M_{T_{1}}^{T_{2}}}(s+\rho_{2M})\left(\partial_{t}u+\partial_{s}u+\frac{u}{s+\rho_{2M}}\right)^{2}d\sigma\notag\\
&\;\;+\frac{1}{2}\int_{K_{T_{1}}^{T_{2}}}\left(\frac{\rho_{2M}-\rho_{2}}{s+\rho_{2}}-\frac{\rho_{2M}-\rho_{1}}{s+\rho_{1}}\right)|\nabla^{*}_{1}u|^{2}+\left(\frac{\rho_{2M}-\rho_{1}}{s+\rho_{1}}-\frac{\rho_{2M}-\rho_{2}}{s+\rho_{2}}\right)|\nabla^{*}_{2}u|^{2}dxdt\notag\\
&\;\;+\frac{1}{2}\int_{K_{T_{1}}^{T_{2}}}\left(\displaystyle\sum_{i=1}^{2}\frac{\rho_{2M}-\rho_{i}}{s+\rho_{i}}\right)\left|\frac{\partial_{s}((s+\rho_{2M})u)}{s+\rho_{2M}}\right|^{2}dxdt\notag\\
&\;\leq A_{1}+A_{2}+A_{3}+A_{4}+A_{5}\notag
\end{align}

We have 
\begin{align*}
\left|\frac{\partial_{s}((s+\rho_{2M})u)}{s+\rho_{2M}}\right|^{2}=\left|\frac{u}{s+\rho_{2M}}+\partial_{s}u\right|^{2}\leq2\left|\frac{u}{s+\rho_{2M}}\right|^{2}+2|\partial_{s}u|^{2}\leq2\left|\frac{u}{s+\rho_{2M}}\right|^{2}+2|\nabla u|^{2}
\end{align*}
and on $D(T_{1})$ we have: $s+\rho_{2M}\leq T_{1}+M$, thus
$$A_{1}\leq (T_{1}+M)\int_{D(T_{1})}\left((\partial_{t}u)^{2}+2\left|\frac{u}{s+\rho_{2M}}\right|^{2}+3|\nabla u|^{2}+\frac{u^{6}}{3}\right)dx$$
Moreover, by the geometric condition we imposed 
$$\min_{\partial V}(s_{0}+\rho_{1}-(\rho_{2M}-\rho_{1}))>0$$
we have 
$$\displaystyle\sum_{i=1}^{2}\frac{\rho_{2M}-\rho_{i}}{s+\rho_{i}}<2$$
Thus $$A_{2}\leq (T_{1}+M)\int_{D(T_{1})}\left|\frac{u}{s+\rho_{2M}}\right|^{2}dx$$
so $$A_{1}+A_{2}\leq(T_{1}+M)\int_{D(T_{1})}\left((\partial_{t}u)^{2}+3\left|\frac{u}{s+\rho_{2M}}\right|^{2}+3|\nabla u|^{2}+\frac{u^{6}}{3}\right)dx$$
Recall that there exist a constant $a_{0}>0$ such that $s+\rho_{2M}\geq a_{0}r$ (Lemma \ref{a0}). Thus,
$$\int\left|\frac{u}{s+\rho_{2M}}\right|^{2}dx\leq\frac{1}{a_{0}^{2}}\int\frac{u^{2}}{r^{2}}dx$$
and by Hardy's inequality:
$$\int\frac{u^{2}}{r^{2}}dx\leq C\int|\nabla u|^{2}dx$$
we get that
\begin{equation}\label{hardy}
\int\left|\frac{u}{s+\rho_{2M}}\right|^{2}dx\lesssim\int|\nabla u|^{2}dx
\end{equation}
so $$A_{1}+A_{2}\leq (c_{0}+c_{1}T_{1})E$$
where $c_{0}$ and $c_{1}$ are constants that depend on the geometry of the obstacle and $E$ is the conserved energy. Now, the term on the mantle $A_{3}$ can be written as follows:
\begin{align*}
A_{3}&=\int_{\overline{M}_{T_{1}}^{T_{2}}}(s+\rho_{2M})\left(\partial_{s}\overline{u}+\frac{\overline{u}}{s+\rho_{2M}}\right)^{2}dy\\
&\leq(T_{2}+M)\int_{\overline{M}_{T_{1}}^{T_{2}}}\left(\partial_{s}\overline{u}+\frac{\overline{u}}{s+\rho_{2M}}\right)^{2}dy\\
&\leq 2(T_{2}+M)\int_{\overline{M}_{T_{1}}^{T_{2}}}\left(|\partial_{s}\overline{u}|^{2}+\left|\frac{\overline{u}}{s+\rho_{2M}}\right|^{2}\right)dy
\end{align*}
Similarly to \eqref{hardy}, we have
$$\int\left|\frac{\overline{u}}{s+\rho_{2M}}\right|^{2}dy\lesssim\int|\nabla \overline{u}|^{2}dy$$
hence 
$$A_{3}\lesssim(T_{2}+M)\int_{\overline{M}_{T_{1}}^{T_{2}}}|\nabla\overline{u}|^{2}dy\leq (c_{2}+c_{3}T_{2})flux(T_{1},T_{2})$$
where $c_{2}$ and $c_{3}$ are constants that depend on the geometry of the obstacle, and where
$$flux(T_{1},T_{2})=\int_{M_{T_{1}}^{T_{2}}}\left(\frac{1}{2}\left|\nu\partial_{t}u+\nabla u\right|^{2}+\frac{u^{6}}{6}\right)d\sigma=\sqrt{2}\int_{\overline{M}_{T_{1}}^{T_{2}}}\left(\frac{|\nabla\overline{u}|^{2}}{2}+\frac{\overline{u}^{6}}{6}\right)dy.$$
\begin{align*}
A_{4}&\leq\frac{1}{2}\int_{K_{T_{1}}^{T_{2}}}\left|\frac{\rho_{2M}-\rho_{2}}{s+\rho_{2}}-\frac{\rho_{2M}-\rho_{1}}{s+\rho_{1}}\right||\nabla^{*}u|^{2}dxdt\\
&\leq\frac{1}{2}\int_{K_{T_{1}}^{T_{2}}}\left(\frac{\rho_{2M}-\rho_{2}}{s+\rho_{2}}+\frac{\rho_{2M}-\rho_{1}}{s+\rho_{1}}\right)|\nabla^{*}u|^{2}dxdt
\end{align*}
thus,
$$A_{4}+A_{5}\leq\frac{1}{2}\int_{K_{T_{1}}^{T_{2}}}\left(\frac{\rho_{2M}-\rho_{2}}{s+\rho_{2}}+\frac{\rho_{2M}-\rho_{1}}{s+\rho_{1}}\right)\left(|\nabla^{*}u|^{2}+\left|\frac{\partial_{s}((s+\rho_{2M})u)}{s+\rho_{2M}}\right|^{2}\right)dxdt$$
Thus \eqref{control} becomes
\begin{align}\label{control1}
&\int_{D(T_{2})}(T_{2}+M-(s+\rho_{2M}))\left(\left|\nabla^{*}u\right|^{2}+\left|\frac{\partial_{s}((s+\rho_{2M})u)}{s+\rho_{2M}}\right|^{2}\right)+T_{2}\frac{u^{6}}{3}dx\\
&\;\leq 2(c_{0}+c_{1}T_{1})E+2(c_{2}+c_{3}T_{2})flux(T_{1},T_{2})+\sum_{i=1}^{2}\int_{T_{1}}^{T_{2}}\int_{\Omega}\frac{\rho_{2M}-\rho_{i}}{s+\rho_{i}}\left(|\nabla^{*}u|^{2}+\left|\frac{\partial_{s}((s+\rho_{2M})u)}{s+\rho_{2M}}\right|^{2}\right)dxdt\notag
\end{align}

Split $\Omega$ into $\left\{s+\rho_{2M}\leq\epsilon t+M\right\}$ and $\left\{s+\rho_{2M}> \epsilon t+M\right\}$, 
where $0<\epsilon<1$ is a constant that depends on the geometry of the obstacle to be later specified, then the last term in \eqref{control1} becomes 
\begin{align*}
J&=\sum_{i=1}^{2}\int_{T_{1}}^{T_{2}}\int_{\Omega}\frac{\rho_{2M}-\rho_{i}}{s+\rho_{i}}\left(|\nabla^{*}u|^{2}+\left|\frac{\partial_{s}((s+\rho_{2M})u)}{s+\rho_{2M}}\right|^{2}\right)dxdt\\
&\leq\sum_{i=1}^{2}\int_{T_{1}}^{T_{2}}\int_{s+\rho_{2M}>\epsilon t+M}\frac{\rho_{2M}-\rho_{i}}{s+\rho_{2M}-(\rho_{2M}-\rho_{i})}\left(|\nabla^{*}u|^{2}+\left|\frac{\partial_{s}((s+\rho_{2M})u)}{s+\rho_{2M}}\right|^{2}\right)dxdt\\
&\;\;+\max_{\partial V}\left(\frac{\rho_{2M}-\rho_{1}}{s_{0}+\rho_{1}}+\frac{\rho_{2M}-\rho_{2}}{s_{0}+\rho_{2}}\right)\int_{T_{1}}^{T_{2}}\int_{s+\rho_{2M}\leq\epsilon t+M}\left(|\nabla^{*}u|^{2}+\left|\frac{\partial_{s}((s+\rho_{2M})u)}{s+\rho_{2M}}\right|^{2}\right)dxdt\\
&=J_{1}+J_{2}
\end{align*}
\begin{align*}
J_{1}&\leq\sum_{i=1}^{2}\int_{T_{1}}^{T_{2}}\frac{\rho_{2M}-\rho_{i}}{\epsilon t+N}\left(\int_{s+\rho_{2M}>\epsilon t+M}\left(|\nabla^{*}u|^{2}+\left|\frac{\partial_{s}((s+\rho_{2M})u)}{s+\rho_{2M}}\right|^{2}\right)dx\right)dt\\
&\leq 2\int_{T_{1}}^{T_{2}}\frac{\rho_{2M}-\rho_{1m}}{\epsilon t+N}\left(\int_{s+\rho_{2M}>\epsilon t+M}\left(|\nabla^{*}u|^{2}+\left|\frac{\partial_{s}((s+\rho_{2M})u)}{s+\rho_{2M}}\right|^{2}\right)dx\right)dt
\end{align*}
with $N=M-\rho_{2M}\geq0$. Since $s+\rho_{2M}\geq a_{0}r$ and using Hardy's inequality, we get
$$\int_{s+\rho_{2M}>\epsilon t+M}\left(|\nabla^{*}u|^{2}+\left|\frac{\partial_{s}((s+\rho_{2M})u)}{s+\rho_{2M}}\right|^{2}\right)dx\lesssim E$$
hence,
$$J_{1}\lesssim\frac{2(\rho_{2M}-\rho_{1m})}{\epsilon}\ln\left(\frac{\epsilon T_{2}+N}{\epsilon T_{1}+N}\right)E\leq C_{1}E+C_{2}E\ln(1+T_{2})$$
where $C_{1}$ and $C_{2}$ are constants that depend on the geometry of the obstacle. Thus 
\begin{equation}\label{J}
 J\leq C_{1}E+C_{2}E\ln(1+T_{2})+\eta_{0}\int_{T_{1}}^{T_{2}}\int_{s+\rho_{2M}\leq\epsilon t+M}\left(|\nabla^{*}u|^{2}+\left|\frac{\partial_{s}((s+\rho_{2M})u)}{s+\rho_{2M}}\right|^{2}\right)dxdt
\end{equation}
with $$\eta_{0}=\max_{\partial V}\left(\frac{\rho_{2M}-\rho_{1}}{s_{0}+\rho_{1}}+\frac{\rho_{2M}-\rho_{2}}{s_{0}+\rho_{2}}\right)$$
The geometric condition $$\min_{\partial V}(s_{0}+\rho_{1}-(\rho_{2M}-\rho_{1}))>0$$ which we assumed so far implies that $\eta_{0}<2$ which is not
enough as we want $0<\eta_{0}<1$ for the proof of the $L^{6}$ decay estimate (Theorem \ref{L6decay}).
Thus, we impose a stronger condition 
$$\min_{\partial V}(s_{0}+\rho_{1}-2(\rho_{2M}-\rho_{1}))>0$$
On the other hand, 
\begin{align}\label{lhs}
&\int_{D(T_{2})}(T_{2}+M-(s+\rho_{2M}))\left(\left|\nabla^{*}u\right|^{2}+\left|\frac{\partial_{s}((s+\rho_{2M})u)}{s+\rho_{2M}}\right|^{2}\right)+T_{2}\frac{u^{6}}{3}dx\\
&\;\;\geq T_{2}(1-\epsilon)\int_{s+\rho_{2M}\leq\epsilon T_{2}+M}\left(\left|\nabla^{*}u\right|^{2}+\left|\frac{\partial_{s}((s+\rho_{2M})u)}{s+\rho_{2M}}\right|^{2}\right)dx+T_{2}\int_{D(T_{2})}\frac{u^{6}}{3}dx\notag
\end{align}
Using \eqref{J} and \eqref{lhs}, \eqref{control1} becomes
\begin{align}\label{control2}
&T_{2}(1-\epsilon)\int_{s+\rho_{2M}\leq\epsilon T_{2}+M}\left(\left|\nabla^{*}u\right|^{2}+\left|\frac{\partial_{s}((s+\rho_{2M})u)}{s+\rho_{2M}}\right|^{2}\right)dx+T_{2}\int_{D(T_{2})}\frac{u^{6}}{3}dx\notag\\
&\;\leq C_{0}E+2c_{1}T_{1}E+C_{2}E\ln(1+T_{2})+2(c_{2}+c_{3}T_{2})flux(T_{1},T_{2})\\
&\;\;\;+\eta_{0}\int_{T_{1}}^{T_{2}}\int_{s+\rho_{2M}\leq\epsilon t+M}\left(|\nabla^{*}u|^{2}+\left|\frac{\partial_{s}((s+\rho_{2M})u)}{s+\rho_{2M}}\right|^{2}\right)dxdt\notag
\end{align}
Now, setting $T_{2}=T$ and $T_{1}=\beta T$ for some $0<\beta<1$ and choosing $0<\epsilon<1$ such that $\epsilon=1-\sqrt{\eta_{0}}$, \eqref{control2} yields
\begin{align}\label{control3}
&T\sqrt{\eta_{0}}\int_{s+\rho_{2M}\leq\epsilon T+M}\left(\left|\nabla^{*}u\right|^{2}+\left|\frac{\partial_{s}((s+\rho_{2M})u)}{s+\rho_{2M}}\right|^{2}\right)dx+T\int_{s+\rho_{2M}\leq T+M}\frac{u^{6}(T,x)}{3}dx\notag\\
&\;\leq C_{0}E+2c_{1}\beta TE+C_{2}E\ln(1+T)+2(c_{2}+c_{3}T)flux(0,T)\\
&\;\;\;+\eta_{0}\int_{0}^{T}\int_{s+\rho_{2M}\leq\epsilon t+M}\left(|\nabla^{*}u|^{2}+\left|\frac{\partial_{s}((s+\rho_{2M})u)}{s+\rho_{2M}}\right|^{2}\right)dxdt\notag
\end{align}
which ends the proof of Proposition \ref{differentialinequality}.

\section{Proof of the scattering (Corollary \ref{scattering})}
As a result of the $L^{6}$ decay estimate we get the scattering result of Corollary \ref{scattering}. The proof of this corollary 
was done in the paper of Blair, Smith, and Sogge \cite{BSS09} and we replicate it here for the sake of completeness.\newline
We have the following Strichartz estimate on functions $w(t,x)$ satisfying homogeneous Dirichlet boundary condition on 
non-trapping obstacles
\begin{equation}\label{strichartz}
\left\|w\right\|_{L^{5}(\mathbb{R};L^{10}(\Omega))}+\left\|w\right\|_{L^{4}(\mathbb{R};L^{12}(\Omega))}\leq c\left(\left\|\left(\nabla w(0,\cdot),\partial_{t}w(0,\cdot)\right)\right\|_{L^{2}(\Omega)}+\left\|\Box w\right\|_{L^{1}(\mathbb{R};L^{2}(\Omega))}\right)
\end{equation} 
and we define the conserved energy of the linear equation \eqref{linearequation}
$$E_{0}(v;t)=\frac{1}{2}\int_{\Omega}|\nabla v|^{2}+|\partial_{t}v|^{2}dx$$
Attention was restricted to the $v_{+}$ function, as symmetric arguments will yield the existence of a $v_{-}$ asymptotic to $u$ at $-\infty$. As remarked in \cite{BSS09}, it is enough to prove \eqref{spacetimeestimate} as \eqref{energyzero} follows as a consequence. They first established the existence of the wave operator, that is for any solution $v$ to the linear equation \eqref{linearequation}, there exists a unique solution $u$ to the non linear equation \eqref{waveequation} such that
$$\lim_{t\longrightarrow\infty}E_{0}(u-v;t)=0$$ 
Given \eqref{strichartz}, for any $\delta>0$ one may select $T$ large so that $\left\|v\right\|_{L^{5}\left(\left[T,\infty\right);L^{10}(\Omega)\right)}\leq\delta$. Given any $w(t,x)$ satisfying $\left\|w\right\|_{L^{5}\left(\left[T,\infty\right);L^{10}(\Omega)\right)}\leq\delta$, we have a unique solution to the linear problem 
$$\Box\widetilde{w}=-(v+w)^{5}$$
$$\lim_{t\longrightarrow\infty}E_{0}(\widetilde{w};t)=0$$
as the right hand side is in $L^{1}([T,\infty);L^{2}(\Omega))$. The estimate \eqref{strichartz} then also ensures that 
$$\left\|\widetilde{w}\right\|_{L^{5}\left(\left[T,\infty\right);L^{10}(\Omega)\right)}\leq c\left\|v+w\right\|^{5}_{L^{5}\left(\left[T,\infty\right);L^{10}(\Omega)\right)}\leq 32c\delta^{5}$$
Hence for $\delta$ sufficiently small, the map $w\longrightarrow\widetilde{w}$ is seen to be a contraction on the ball of radius $\delta$ in $L^{5}\left(\left[T,\infty\right);L^{10}(\Omega)\right)$. The unique fixed point $w$ can be uniquely extended over all of $\mathbb{R}\times\Omega$. Hence taking $u=v+w$ shows the existence of the wave operator.

To see that the wave operator is surjective, they used the $L^6$ decay estimate which we proved in Theorem \ref{L6decay} for our obstacle. 
This decay estimate establishes that the non linear effects of the solution map for the non linear equation \eqref{waveequation} 
diminish as time evolves.

By the result of Theorem \ref{L6decay}, given any $\epsilon>0$, there exists $T$ sufficiently large such that
$$\sup_{t\geq T}\left\|u(t,\cdot)\right\|_{L^{6}}<\epsilon$$
Hence for any $S>T$ we obtain the following for any solution $u$ to \eqref{waveequation}
\begin{align*}
\left\|u\right\|_{L^{5}([T,S];L^{10}(\Omega))}+\left\|u\right\|_{L^{4}([T,S];L^{12}(\Omega))}&\leq c\left(E+\left\|u^{5}\right\|_{L^{1}([T,S];L^{2}(\Omega))}\right)\\
&\leq cE+c\epsilon\left\|u\right\|^{4}_{L^{4}([T,S];L^{12}(\Omega))}
\end{align*} 
A continuity argument now yields $\left\|u\right\|_{L^{5}([T,\infty);L^{10}(\Omega))}+\left\|u\right\|_{L^{4}([T,\infty);L^{12}(\Omega))}\leq 2cE$ and by time reflection argument, \eqref{spacetimeestimate} follows. However, this implies that the linear problem 
$$\Box w=-u^{5}$$
$$\lim_{t\longrightarrow\infty}E_{0}(w;t)=0$$
admits a solution, showing that the wave operator is indeed surjective as $v=u-w$ is the desired solution to \eqref{linearequation}.


\end{document}